\newtheorem{theorem}{Theorem}[section]
\newtheorem{lemma}[theorem]{Lemma}
\newtheorem{proposition}[theorem]{Proposition}
\newtheorem{corollary}[theorem]{Corollary}
\newtheorem{_definition}[theorem]{Definition}
\newtheorem{_remark}[theorem]{\it Remark}
\newenvironment{remark}{\begin{_remark}\rm}{\end{_remark}}
\newtheorem{_example}[theorem]{Example}
\newenvironment{example}{\begin{_example}\rm}{\end{_example}}
\numberwithin{equation}{section}
\numberwithin{table}{section}
\numberwithin{figure}{section}
\newcommand{\F}{\mathord{\mathbb F}}
\renewcommand{\P}{\mathord{\mathbb  P}}
\newcommand{\C}{\mathord{\mathbb  C}}
\newcommand{\Q}{\mathord{\mathbb  Q}}
\newcommand{\Z}{\mathord{\mathbb Z}}
\newcommand{\CCC}{\mathord{\mathcal C}}
\newcommand{\EEE}{\mathord{\mathcal E}}
\newcommand{\FFF}{\mathord{\mathcal F}}
\newcommand{\HHH}{\mathord{\mathcal H}}
\newcommand{\III}{\mathord{\mathcal I}}
\newcommand{\KKK}{\mathord{\mathcal K}}
\newcommand{\LLL}{\mathord{\mathcal L}}
\newcommand{\MMM}{\mathord{\mathcal M}}
\newcommand{\NNN}{\mathord{\mathcal N}}
\newcommand{\OOO}{\mathord{\mathcal O}}
\newcommand{\PPP}{\mathord{\mathcal P}}
\newcommand{\TTT}{\mathord{\mathcal T}}
\newcommand{\UUU}{\mathord{\mathcal U}}
\newcommand{\XXX}{\mathord{\mathcal X}}
\newcommand{\SSSS}{\mathord{\mathfrak S}}
\newcommand{\inj}{\hookrightarrow}
\newcommand{\surj}{\mathbin{\to \hskip -7pt \to}}
\newcommand{\isom}{\mathbin{\,\raise -.6pt\rlap{$\to$}\raise 3.5pt \hbox{\hskip .3pt$\mathord{\sim}$}\,}}
\newcommand{\ratmap}{\cdot\hskip -1pt\cdot\hskip -1.5pt\to}
\newcommand{\set}[2]{\{\; {#1} \; \mid \; {#2} \;  \}}
\newcommand{\shortset}[2]{\{{#1} \mid  {#2} \}}
\newcommand{\gen}[1]{\langle {#1}  \rangle}
\newcommand{\genset}[2]{\langle\; {#1} \; \mid  \; {#2}\; \rangle}
\newcommand{\wt}{\widetilde}
\newcommand{\tensor}{\otimes}
\newcommand{\sprime}{\sp\prime}
\newcommand{\spar}[1]{\sp{(#1)}}
\newcommand{\sperp}{\sp{\perp}}
\newcommand{\dual}{\sp{\vee}}
\newcommand{\inv}{\sp{-1}}
\newcommand{\erase}[1]{}
\newcommand{\Hom}{\mathord{\mathrm {Hom}}}
\newcommand{\sheafHom}{\mathord{\mathcal Hom}}
\newcommand{\GL}{\mathord{\mathit {GL}}}
\newcommand{\id}{\mathord{\mathrm {id}}}
\newcommand{\pr}{\mathord{\mathrm {pr}}}
\newcommand{\Spec}{\operatorname{\mathrm {Spec}}}
\newcommand{\Ker}{\operatorname{\mathrm {Ker}}}
\newcommand{\rank}{\operatorname{\mathrm {rank}}}
\newcommand{\disc}{\operatorname{\mathrm {disc}}}
\newcommand{\ang}[1]{\langle #1\rangle}
\newcommand{\rmand}{\textrm{and}}
\newcommand{\qquand}{\qquad\rmand\qquad}
\newcommand{\quand}{\quad\rmand\quad}
\newcommand{\qand}{\;\;\rmand\;\;}
\newcommand{\sqand}{\;\rmand\;}
\newcommand{\mystruthd}[2]{\phantom{\hbox{\vrule  height #1 depth #2}}}
\newcommand{\Gr}{G}
\newcommand{\GGlc}{{\Gr_{n, l}}\times{\Gr_n^c}}
\newcommand{\Fr}{\phi}
\newcommand{\Nlc}{N_{l}^c}
\newcommand{\sparq}{\spar{q}}
\newcommand{\Incvar}{\III}
\newcommand{\cond}{{\rm C}}
\newcommand{\statement}{{\rm S}}
\newcommand{\latN}{\hskip .2pt\NNN}
\newcommand{\latNtl}{\hskip .2pt\wt{\NNN}}
\newcommand{\Nprim}{\latN_{\mathord{\rm{prim}}}}
\newcommand{\NSigma}{\latN_{\Sigma}}
\newcommand{\Nmin}{N_{\mathord{\rm{min}}}}
\newcommand{\Pset}[1]{\PPP_{#1}}
\newcommand{\Lset}[1]{\LLL_{#1}}
\newcommand{\weight}{\mathord{\rm{wt}}}
\newcommand{\baseptP}{B_0}
\newcommand{\intdim}{m}
\newcommand{\sumcodim}{k}
\newcommand{\symchi}{\psi}
\newcommand{\tlchi}{\tilde{\psi}}
\newcommand{\Tq}{T}
\begin{document}
\title[Frobenius incidence varieties]%
{On Frobenius incidence varieties of linear subspaces over finite fields}

\author{I.~Shimada}
\email{shimada@math.sci.hiroshima-u.ac.jp
}

\address{%
Department of Mathematics, 
Graduate School of Science, 
Hiroshima University,
1-3-1 Kagamiyama, 
Higashi-Hiroshima, 
739-8526 JAPAN
}

\thanks{Partially supported by
 JSPS Core-to-Core Program No.~18005  and 
 JSPS Grants-in-Aid for Scientific Research (B) No.~20340002.
}

\subjclass[2000]{14M15,  14G15,  14C25}



\begin{abstract} 
We  define  Frobenius incidence varieties
by means of the  incidence relation of Frobenius images of 
linear subspaces in a fixed vector space over a finite field,
and investigate their  properties
such as supersingularity, Betti numbers and  unirationality.
These varieties are variants of the Deligne-Lusztig varieties.
We then 
study  the  lattices associated with algebraic cycles on them.
We obtain a positive-definite lattice of rank $84$
that yields a dense sphere packing 
from a $4$-dimensional Frobenius incidence variety 
in characteristic $2$.
 \end{abstract}

\maketitle


\section{Introduction}
Codes arising from the rational points of Deligne-Lusztig varieties have been 
studied in several cases~\cite{MR1186416, MR1866342,MR1759842}.
In this paper,
we investigate lattices arising from algebraic cycles 
on certain variants of Deligne-Lusztig varieties,
which we call \emph{Frobenius incidence varieties}.
We study basic  properties of  Frobenius incidence varieties
such as supersingularity, Betti numbers and unirationality.
By means of intersection pairing of algebraic cycles
on a $4$-dimensional Frobenius incidence variety over $\F_2$,
we obtain a positive-definite lattice of rank $84$
that yields a dense sphere packing.
\subsection{An illustrating example}
Before giving the general definition of Frobenius incidence varieties in~\S\ref{sec:mainresults},
we present the simplest example of  Frobenius incidence \emph{surfaces},
hoping that it explains the motivation for the main results of this paper.
\par
\medskip
We fix a  vector space $V$ over $\F_p$ of dimension $3$
with coordinates $(x_1, x_2, x_3)$,
and consider the projective plane $\P_* (V)$
with the homogeneous coordinate system $(x_1: x_2: x_3)$.
Let $\bar{F}$ be an algebraic closure of $\F_p$.
An $\bar{F}$-valued point  $(a_1: a_2: a_3)$ of $\P_* (V)$
corresponds to the  $1$-dimensional linear subspace of $V\tensor \bar{F}$ spanned by 
$(a_1, a_2, a_3) \in V\tensor \bar{F}$.
Let $\P^*(V)$ denote the dual projective plane with homogeneous coordinates 
$(y_1: y_2: y_3)$ dual to $(x_1: x_2: x_3)$.
An $\bar{F}$-valued point  $(b_1: b_2: b_3)$ of $\P^* (V)$
corresponds to the  $2$-dimensional linear subspace of $V\tensor \bar{F}$ defined by 
$b_1x_1+b_2x_2+b_3x_3=0$.
The incidence variety is a hypersurface of $\P_*(V)\times \P^*(V)$
defined by
$x_1y_1+x_2 y_2+x_3 y_3=0$,
which parametrizes all the pairs $(L, M)$ of a $1$-dimensional linear subspace $L$
and a $2$-dimensional linear subspace $M$ such that $L\subset M$.
\par
\medskip
Let $q$ be a power of $p$ by a positive integer.
The $q$th power Frobenius morphism of $V\tensor \bar{F}$
is the morphism from  $V\tensor \bar{F}$ to itself given by 
$(x_1, x_2, x_3)\mapsto (x_1^q, x_2^q, x_3^q)$.
For a linear subspace $N$ of $V\tensor \bar{F}$,
we denote by $N^q\subset V\tensor \bar{F}$ the image of $N$ by the $q$th power Frobenius morphism,
which is again a  linear subspace  of $V\tensor \bar{F}$. 
If a $2$-dimensional linear subspace $M$ of 
$V\tensor \bar{F}$ corresponds to a point $(b_1: b_2: b_3)\in \P^* (V)$,
the linear subspace $M^q$ 
corresponds to the point $(b_1^q: b_2^q: b_3^q)$.
\par
\medskip
We take two Frobenius twists of the incidence variety,
and take their intersection.
Let $r$ and $s$ be powers of $p$ by positive integers.
The hypersurface of $\P_*(V)\times \P^*(V)$
defined by
\begin{equation}\label{eq:r1}
x_1^r y_1+x_2^r y_2+x_3^r y_3=0
\end{equation}
parametrizes  the pairs $(L, M)$  such that
$L^r\subset M$, while 
the hypersurface of $\P_*(V)\times \P^*(V)$
defined by
\begin{equation}\label{eq:1s}
x_1 y_1^s+x_2 y_2^s+x_3 y_3^s=0
\end{equation}
parametrizes  the pairs $(L, M)$ such that
$L\subset M^s$.
Using affine coordinates of  $\P_*(V)\times \P^*(V)$,
we see that these two hypersurfaces~\eqref{eq:r1} and~\eqref{eq:1s} intersect transversely.
Let $X$ be the intersection,
which is a smooth surface parameterizing 
the pairs $(L, M)$ such that
$$
L^r\subset M\quad\textrm{and}\quad L\subset M^s,
$$
or equivalently
$$
L^r\subset M\cap M^{rs},
$$
or equivalently
$$ 
L+L^{rs}\subset M^s.
$$
\par
\medskip
We put $q:=rs$, and 
count the $\F_{q^\nu}$-rational points of the surface $X$
for positive integers $\nu$,
that is, we count the number of the pairs $(L, M)$  of $\F_{q^\nu}$-rational linear subspaces
$L$ and $M$
that satify the above conditions.
Consider 
the first projection $\pi_1: X\to \P_*(V)$.
Let $P$ be an $\F_{q^\nu}$-rational  point  of $\P_*(V)$  corresponding to $L\subset V\tensor \bar{F}$.
Then, if $\dim (L+L^{q})=2$,  the fiber $\pi_1\inv (P)$
consists of  a single $\F_{q^\nu}$-rational point  
corresponding to the  $\F_{q^\nu}$-rational subspace $M$
such that $L+L^{q}= M^s$,
while, if $\dim (L+L^{q})=1$,  it is isomorphic to an $\F_{q^\nu}$-rational projective line 
parameterizing   subspaces $M$
such that $L+L^{q}\subset  M^s$.
Since $\dim (L+L^{q})=1$ holds if and only $P$ is an $\F_{q}$-rational point of 
$\P_*(V)$, 
the number of the $\F_{q^\nu}$-rational points of $X$ is equal to
$$
\left(\frac{q^{3\nu}-1}{q^\nu-1}-\frac{q^3-1}{q-1}\right)+
\left(\frac{q^{2\nu}-1}{q^\nu-1}\right)\cdot \left(\frac{q^3-1}{q-1}\right). 
$$
If we put 
\begin{equation*}
N(t):=t^2 +(q^2+q+2)t+1,
\end{equation*}
then this number is equal to $N(q^\nu)$.
In particular, from the classical theorems
on the Weil conjecture~(see,~for example,~\cite[App.~C]{MR0463157}),  
we obtain the Betti  numbers $b_i(X)$ of the surface $X$.
We have   $b_0(X)=b_4(X)=1$, $b_1(X)=b_3(X)=0$ and
$$
b_2(X)=q^2+q+2.
$$
\par
Remark that, when $r>2$ and $s>2$, 
the canonical line bundle $\OOO(r-2, s-2)$ of $X$ is ample and has non-zero global sections.
Hence,
the complex algebraic surface $X_{\C}$ 
defined by~\eqref{eq:r1} and~\eqref{eq:1s} in $\C\P^2\times\C\P^2$
cannot be unirational (see~\cite[Chap.~V, Remark~6.2.1]{MR0463157}), and 
the second Betti cohomology group of $X_{\C}$
cannot be spanned by the classes of algebraic
cycles because of  the Hodge-theoretic reason
(see~\cite[p.~163]{MR1288523}).
\par
\medskip
However,
the surface $X$ has the so-called pathological properties of algebraic varieties in positive characteristics,
that is, $X$ contradicts naive expectations from the properties of $X_{\C}$.
Since the projection $\pi_1: X\to \P_*(V)$ gives rise to a purely inseparable extension of the function fields,
$X$ is unirational.
Moreover, 
since $N(t)$ is a polynomial in $t$,
the eigenvalues of the $q$th power Frobenius endmorphism 
acting on the $l$-adic cohomology ring of $X$
is a power of $q$ by integers.
According to the Tate conjecture,
the second $l$-adic cohomology group of $X$ should be spanned 
by the classes of algebraic curves on $X$  defined over $\F_{q}$.
This is indeed the case.
There are $2(b_2(X)-1)$ special rational curves defined over $\F_{q}$ on $X$;
the fibers $\Sigma_P$ of  $\pi_1: X\to \P_*(V)$ over the $\F_{q}$-rational 
points $P$  of $\P_*(V)$,
and the fibers $\Sigma\sprime_Q$ of  $\pi_2: X\to \P^*(V)$ over the $\F_{q}$-rational 
points $Q$  of $\P^*(V)$.
By calculating the intersection numbers between these curves (see~\cite[Chap.~V, \S1]{MR0463157}), 
we see that the numerical equivalence classes of 
$\Sigma_P$ and $\Sigma\sprime_Q$ together with the classes of the line bundles
$\OOO(1, 0)$ and $\OOO(0, 1)$
form a hyperbolic lattice $\NNN(X)$ of rank $b_2(X)$ under the intersection pairing.
Thus their classes span the second $l$-adic cohomology group of $X$.
\par
\medskip
When $p=r=s=2$, 
the surface $X$ is a supersingular $K3$ surface 
in characteristic $2$ with $|\disc \NNN(X)|=4$.
The defining  equations~\eqref{eq:r1} and~\eqref{eq:1s},
which were discovered by Mukai,
and the configuration of the $21+21$ rational curves $\Sigma_P$ and  $\Sigma\sprime_Q$
played an important role in the study of the automorphism group
of this $K3$ surface in Dolgachev-Kondo~\cite{MR1935564}. 
\par
\medskip
Looking at this example, 
we expect that  the lattice $\NNN(X)$ possesses  interesting properties.
In particular,  its primitive part can yield a dense sphere packing.
\subsection{Definitions and the main results}\label{sec:mainresults}
We give the  definition of Frobenius incidence varieties,
 and state the main results of this paper.
\par
\medskip
Let $p$ be a prime,
and let $q:=p^\nu$ be a power of $p$ by a positive integer $\nu$.
For a field $F$ of characteristic $p$
with an algebraic closure $\bar{F}$, we put
$$
F\sp{q}:=\shortset{x^q}{x\in F}
\quand
 F\sp{1/q}:=\shortset{x\in \bar{F}}{x^q\in F}.
$$
For a scheme $Y$ defined over a subfield of $F$, 
we denote by $Y(F)$ the set of $F$-valued points of $Y$.
\par
\medskip
We fix an $n$-dimensional linear space $V$
over $\F_p$ with $n\ge 3$,
and denote by $\Gr_{n,l}=\Gr_n^{n-l}$ or
by $\Gr_{V,l}=\Gr_V^{n-l}$ 
the Grassmannian variety of  $l$-dimensional subspaces of $V$.
To ease the notation,
we use the same letter $L$ to denote an $F$-valued  point 
$L\in {\Gr}_{n,l} (F)$  of ${\Gr}_{n,l}$
and the corresponding linear subspace $L\subset {V_F}:=V\tensor F$.
Moreover,
for an extension field $F\sprime$ of $F$, we  write $L$
for the  linear subspace $L\tensor_{F} F\sprime$ of $V_{F\sprime}$.
Let $\Fr$ denote the $p$\,th power Frobenius morphism of ${\Gr}_{n,l}\tensor \bar{\F}_p$
over $\bar{\F}_p$, 
and let $\Fr\sparq$  be  the $\nu$-fold iteration of $\Fr$.
Then $\Fr\sparq$
 induces a bijection   from ${\Gr}_{n,l} (F)$ to ${\Gr}_{n,l} (F\sp{q})$.
We denote by
$L^q\in {\Gr}_{n,l} (F\sp{q})$ the image of $L\in {\Gr}_{n,l} (F)$ by $\Fr\sparq$,
and by $L\sp{1/q}\in {\Gr}_{n,l} (F\sp{1/q})$ the point that is mapped to $L$ by $\Fr\sparq$.
Let $(x_1, \dots, x_n)$ be $\F_p$-rational coordinates of $V$.
If $L$ is defined in $V_F$ by linear equations $\sum_j a_{ij}x_j=0$ $(i=1, \dots, n-l)$ with $a_{ij}\in F$,
then $L^q$ is defined by the linear equations $\sum_j a_{ij}^qx_j=0$ $(i=1, \dots, n-l)$.
\par
\medskip
Let $l$ and $c$ be positive integers such that $l+c<n$.
We denote by $\Incvar$ the incidence subvariety of ${\Gr}_{n,l}\times {{\Gr}_n^c}$.
By definition, 
$\Incvar$ is 
the reduced subscheme of ${\Gr}_{n,l}\times {{\Gr}_n^c}$
such that, for any field $F$ of characteristic $p$, we have
$$
\Incvar(F)=\set{(L, M)\in {\Gr}_{n,l}(F)\times {{{\Gr}_n^c}}(F)}{L\subset M}.
$$
\par
Let $r$ and $s$ be powers of $p$ by non-negative integers
such that 
$r>1$ or $s>1$ holds.
We define 
the \emph{Frobenius incidence variety}  $X[r, s]_l^c$ to be  the scheme-theoretic intersection
of the pull-backs 
$(\Fr\spar{r} \times \id )^*\,\Incvar$ and $( \id \times \Fr \spar{s})^*\,\Incvar$
of $\Incvar$,
where $\id$ and $\Fr\spar{1}$ denote the identity map:
\begin{equation*}\label{eq:schemeX}
X[r, s]_l^c:=\,(\Fr\spar{r} \times \id )^*\,\Incvar\,\cap\, ( \id \times  \Fr\spar{s})^*\,\Incvar.
\end{equation*}
For simplicity,
we write $X$  or $X[r, s]$ or $X_l^c$ 
for $X[r, s]_l^c$ if there is no possibility of confusion.
The scheme $X$ is  defined over $\F_p$ and, for any field $F$ over $\F_p$, 
we have 
\begin{equation}\label{eq:setX}
X(F)=
\set{(L, M)\in {\Gr}_{n,l}(F)\times {{{\Gr}_n^c}}(F)}%
{L\sp{r}\subset M \qand L\subset M\sp{s}}.
\end{equation}
We have the following:
\begin{proposition}\label{prop:smoothdim}
The projective scheme $X$  is  smooth and geometrically  irreducible  
of dimension $(n-l-c)(l+c)$.
\end{proposition}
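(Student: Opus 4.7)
The plan is to reduce the statement to a local calculation in suitable affine charts that exploits the vanishing of the Frobenius differential, and then to obtain irreducibility from a generic fibration structure on $X$.

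Fix the $\F_p$-rational coordinates $(x_1,\dots,x_n)$ of $V$. For subsets $S,T\subset\{1,\dots,n\}$ with $|S|=l$ and $|T|=n-c$, let $U_S\subset\Gr_{n,l}$ and $U_T\subset\Gr_n^c$ denote the standard affine charts consisting of subspaces that project isomorphically onto the coordinates indexed by $S$ and $T$ respectively. Because $\Fr\sparq$ raises matrix entries to the $q$-th power and thus preserves the identity block in each such representative matrix, each Frobenius $\Fr\sparq$ stabilises $U_S$ and $U_T$. A short linear-algebra argument shows that for every $\bar{\F}_p$-valued point $(L_0,M_0)\in X$ one can choose $S\subset T$ with $L_0\in U_S$ and $M_0\in U_T$: first pick a $c$-subset $T^c\subset\{1,\dots,n\}$ whose coordinate span is a complement to $M_0$, then extend it to an $(n-l)$-subset $S^c\supset T^c$ whose coordinate span is a complement to $L_0$; such an extension exists because $L_0\subset M_0$ forces $L_0\cap\F_p^{T^c}=0$.

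In such a chart I parametrise $L$ as the row-span of $[\,I_l\mid A'\mid A''\,]$ and $M$ as the row-span of
$$
\begin{pmatrix} I_l & 0 & B_1 \\ 0 & I_{n-c-l} & B_2 \end{pmatrix},
$$
with $A'$ of shape $l\times(n-c-l)$, $A''$ of shape $l\times c$, $B_1$ of shape $l\times c$, and $B_2$ of shape $(n-c-l)\times c$. A direct matrix calculation shows that the conditions $L^r\subset M$ and $L\subset M^s$ become the $2lc$ equations
$$
(A'')^{(r)} - B_1 - (A')^{(r)} B_2 = 0, \qquad A'' - B_1^{(s)} - A' B_2^{(s)} = 0,
$$
where the superscripts $(r),(s)$ denote raising entries to the indicated powers. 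The Jacobian of this system with respect to the $2lc$ variables $(B_1,A'')$ is upper or lower triangular with $\pm I_{lc}$ on the diagonal, because in characteristic $p$ the differentials of the Frobenius-twisted terms vanish, and the hypothesis that $r>1$ or $s>1$ guarantees that at least one of the cross derivatives is zero. The $2lc$ equations are therefore regular at every point of $X\cap(U_S\times U_T)$, which is consequently smooth of codimension $2lc$, i.e., of dimension $l(n-l)+c(n-c)-2lc=(l+c)(n-l-c)$. This gives smoothness and the dimension formula.

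Geometric irreducibility of $X$ reduces, by smoothness, to connectedness. The natural approach is via the first projection $\pi_1\colon X\to\Gr_{n,l}$: its fibre over $L$ is the set of $M\in\Gr_n^c$ with $L^r+L^{1/s}\subset M$, which is empty when $\dim(L+L^{rs})>n-c$ and otherwise equals the Grassmannian of codimension-$c$ subspaces of $V/(L^r+L^{1/s})$, hence is irreducible. In the range $2l+c\le n$, the locus $\{L:\dim(L+L^{rs})=2l\}$ is open, nonempty, and irreducible in $\Gr_{n,l}$, and the restriction of $\pi_1$ to its preimage is a Grassmannian bundle of total dimension $(l+c)(n-l-c)$; by the equidimensionality just proved, this preimage is open and dense in $X$, so $X$ is irreducible. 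The symmetric argument with $\pi_2$ handles the range $l+2c\le n$. The main obstacle I foresee is the remaining regime $l+c<n<\min(2l+c,\,l+2c)$, where the generic fibres of both projections are empty: here one must stratify by the Frobenius rank $\dim(L+L^{rs})$ and verify that the single stratum supporting fibres of total dimension $(l+c)(n-l-c)$ is irreducible. This requires a Schubert-type calculation estimating the dimension of the Frobenius-degeneracy loci $\{L:\dim(L\cap L^{rs})\ge k\}\subset\Gr_{n,l}$, and in my view it is this step that is the technical core of the irreducibility claim.
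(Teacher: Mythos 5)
Your smoothness argument is correct and is essentially the paper's argument written in coordinates: the paper realizes $X$ as the zero locus of a section $\tilde\gamma$ of a rank-$2lc$ bundle on $\Gr_{n,l}\times\Gr_n^c$ and checks transversality on tangent spaces using the fact that Frobenius has zero differential, which is exactly what your triangular Jacobian in the charts $U_S\times U_T$ (with $S\subset T$) expresses. The chart-covering lemma and the case analysis on $r=1$ or $s=1$ are both sound, so this half of the proposal stands.

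The irreducibility half has a genuine gap, which you yourself flag. Your fibration argument only closes in the ranges $2l+c\le n$ or $l+2c\le n$, and even there it silently uses two facts you do not establish: (i) that the open stratum $\{L:\dim(L+L^{rs})=2l\}$ is nonempty over $\bar\F_p$, and (ii) that the preimages of the deeper strata $\{L:\dim(L\cap L^{rs})\ge d\}$ have dimension strictly less than $\dim X$, which requires the dimension count $\operatorname{codim}=d(n-2l+d)$ for these Frobenius-degeneracy loci. In the remaining regime $l+c<n<\min(2l+c,\,l+2c)$ (nonempty, e.g.\ $n=5$, $l=c=2$) the generic fibres of both projections are empty, the dense open part of $X$ is in bijection with the single stratum $T_{l,\,2l+c-n}$, and your proof would need that stratum to be irreducible --- a statement you correctly identify as the technical core but do not prove, and which is not obviously easier than the original claim. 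The paper avoids all of this by a different mechanism: it computes $|X(\F_{(rs)^\nu})|$ as a polynomial $\Nlc(rs,(rs)^\nu)$ that is \emph{monic} in the second variable of degree $(l+c)(n-l-c)$ (Theorem~\ref{thm:Nnlc}, via the recursive point counts of the strata $T_{l,d}$ in Propositions~\ref{prop:Ttau} and~\ref{prop:tau}); by Lang--Weil the leading coefficient counts the top-dimensional geometric components, so there is exactly one, and smoothness plus equidimensionality then force geometric irreducibility. To complete your approach you would either have to prove irreducibility of the degeneracy loci directly, or fall back on a point-count/Lang--Weil argument of the paper's type.
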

\begin{example}\label{example:Nn11first}
Let $(x_1:\dots: x_n)$ and $(y_1:\dots: y_n)$
be  homogeneous coordinates of $\Gr_{V,1}=\P_*(V)$ and $\Gr_V^1=\P^*(V)$,
respectively, 
that are dual to each other.
Then the incidence subvariety $\Incvar$
 is  defined by  $\sum x_iy_i=0$ in $\P_*(V)\times \P^*(V)$, and hence  
$X[r,s]^1_1$ is  defined  by 
\begin{equation}\label{eq:Mukai}
\renewcommand{\arraystretch}{1.2}
\left\{
\begin{array}{l}
 x_1^r\, y_1+\cdots +x_n^r\, y_n=0, \\
 x_1\, y_1^{s}+\cdots +x_n\, y_n^{s}=0.
\end{array}
\right.
\end{equation}
%
%
Therefore
$X[r, s]_{1}^1$ is of general type when $r$ and $s$ are sufficiently large.
\end{example}
We show that the Frobenius incidence varieties,
which are of  non-negative  Kodaira dimension in general,
have two pathological features of 
algebraic geometry in positive characteristics;
namely, supersingularity and unirationality.
\par
\medskip
Our first main result is as follows:
\begin{theorem}\label{thm:Fss}
There exists a polynomial $N(t)$ with integer coefficients
such that the number 
of $\F_{(rs)^\nu}$-rational points of $X$
is equal to $N((rs)^\nu)$ for any $\nu\in \Z_{>0}$.
\end{theorem}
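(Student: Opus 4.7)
The strategy mirrors that of the introductory example: project $X$ to $\Gr_{n,l}$ via the first projection $\pi_1$, stratify the base by an invariant of the Frobenius action, and show that each stratum contributes a polynomial in $q^\nu=(rs)^\nu$.

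First, I would rewrite the defining conditions in a unified form. Since the $s$-th power Frobenius is bijective on $V_{\bar{\F}_p}$, the condition $L\subset M^s$ is equivalent to $L^{1/s}\subset M$; combined with $L^r\subset M$ this gives
\[
(L,M)\in X \;\Longleftrightarrow\; L^r + L^{1/s}\subset M \;\Longleftrightarrow\; L + L^q \subset M^s,
\]
where the second equivalence is obtained by applying $\Fr\spar{s}$ and using $q=rs$. For $L\in \Gr_{n,l}(\F_{q^\nu})$, both $L$ and $L^q$ are $\F_{q^\nu}$-rational (since $\Fr\spar{q}$ and $\Fr\spar{q^\nu}$ commute), and so $W(L):=L+L^q$ is $\F_{q^\nu}$-rational of some dimension $w=w(L)$ satisfying $l\le w\le\min(2l,n)$; applying $\Fr\spar{1/s}$, $W(L)^{1/s}$ is an $\F_{q^\nu}$-rational subspace of the same dimension. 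The fiber $\pi_1^{-1}(L)\cap X(\F_{q^\nu})$ is then in bijection with the set of $\F_{q^\nu}$-rational $(n-c)$-dimensional subspaces of $V_{\F_{q^\nu}}$ containing $W(L)^{1/s}$, and so has cardinality $\binom{n-w}{c}_{q^\nu}$ (interpreted as zero when $w>n-c$), which is a polynomial in $q^\nu$. Writing $S_w\subset\Gr_{n,l}$ for the locally closed subvariety defined by $\dim(L+L^q)=w$, summing over the fibers of $\pi_1$ yields
\[
|X(\F_{q^\nu})| \;=\; \sum_{w} |S_w(\F_{q^\nu})|\cdot \binom{n-w}{c}_{q^\nu},
\]
reducing the theorem to showing that each $|S_w(\F_{q^\nu})|$ is a polynomial in $q^\nu$.

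This last step is the main obstacle. The stratum $S_w$ is the preimage under the Frobenius-graph morphism $L\mapsto(L,L^q)$ of the $\GL_n$-orbit $Z_w\subset\Gr_{n,l}\times\Gr_{n,l}$ parameterizing pairs $(L_1,L_2)$ of $l$-dimensional subspaces in relative position $\dim(L_1+L_2)=w$; it is thus a Deligne-Lusztig-type subvariety of $\Gr_{n,l}$. Its polynomial point count can be obtained by applying the Lang-Steinberg theorem to the Lang isogeny $g\mapsto g^{-1}\Fr\spar{q}(g)$ on $\GL_n$ and using the $P$-double coset (Bruhat) decomposition of $\GL_n$ corresponding to the relative-position strata $Z_w$, where $P$ is the parabolic stabilizer of a base point of $\Gr_{n,l}$; equivalently, one can parameterize $S_w(\F_{q^\nu})$ directly using the Schubert decomposition of $\Gr_{n,l}$ with respect to an $\F_q$-rational complete flag. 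Either way, substituting the polynomial expressions for $|S_w(\F_{q^\nu})|$ into the displayed formula presents $|X(\F_{q^\nu})|$ as a polynomial in $q^\nu$ with integer coefficients, which is the sought $N(t)$.
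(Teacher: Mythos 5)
Your reduction coincides with the paper's: the equivalence of $L^r\subset M$ and $L\subset M^s$ with $L+L^{rs}\subset M^s$ is \eqref{eq:setX2}, your strata $S_w\subset\Gr_{n,l}$ are the paper's $T_{l,d}$ with $d=2l-w$, and the fiber count $g^c_{n-w}(q^\nu)$ over a point of $S_w(\F_{q^\nu})$ is exactly how the paper finishes the proof of Theorem~\ref{thm:Nnlc}. But the step you yourself call ``the main obstacle'' --- that $|S_w(\F_{q^\nu})|$ is a polynomial in $q^\nu$ --- is where the entire content of the theorem lies, and neither of the two methods you gesture at delivers it as stated. The strata $S_w$ are not unions of Schubert cells with respect to any $\F_q$-rational flag: already for $n=3$, $l=1$ the stratum $\{L:\,L=L^q\}$ is the finite set $\P^2(\F_q)$ and its complement is $\P^2$ minus those points, neither of which is a union of cells, so there is no ``direct Schubert parameterization'' of $S_w(\F_{q^\nu})$. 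As for Lang--Steinberg: it identifies $S_w(\F_{q^\nu})$ with the fixed points of the $q^\nu$-th power Frobenius on a parabolic Deligne--Lusztig stratum, but counting those fixed points is precisely the problem that in the literature is solved by the representation theory of $\GL_n$ over finite fields (Green functions), which the paper explicitly notes it avoids; the Bruhat decomposition alone gives no count, because the individual relative-position strata are not affine cells permuted by $F^\nu$ and one would still need to know the Frobenius eigenvalues on their cohomology. So the key lemma is asserted, not proved.

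The paper closes exactly this gap by an elementary double count (Proposition~\ref{prop:Ttau}). Writing $T_{l,d}=\{L:\dim(L\cap L^q)=d\}$, it counts the pairs $(L,M)$ with $\dim M=2l-d$ and $L+L^q\subset M$ in two ways: fibering over $L$ gives $\sum_{t\ge d}|T_{l,t}(\F_{q^\nu})|\cdot g_{n-2l+t,\,t-d}(q^\nu)$, while the equivalence $L+L^q\subset M\Leftrightarrow L\subset M\cap M^{1/q}$ allows one to fiber over $M$ and obtain $\sum_{u\ge l}|T_{2l-d,u}(\F_{q^\nu})|\cdot g_{u,l}(q^\nu)$. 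Since $2l-d>l$ when $d<l$, the second sum involves only strata strictly higher in the lexicographic order on $(l,d)$, so one can solve for $|T_{l,d}(\F_{q^\nu})|$ by descending induction; this produces the recursion \eqref{eq:taurec} defining polynomials $\tau_{l,d}(x,y)$ with $|T_{l,d}(\F_{q^\nu})|=\tau_{l,d}(q,q^\nu)$, and then $N(t)=\Nlc(rs,t)$. To complete your argument you would need to supply a step of this strength (or genuinely carry out the Deligne--Lusztig computation for $\GL_n$); the framework in your first two paragraphs is correct but does not contain it.
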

In other words,
$X$ is supersingular over $\F_{rs}$
in the sense that 
the eigenvalues of the $rs$\,th power Frobenius 
endomorphism acting  on the $l$-adic  cohomology ring of $X\tensor \bar{\F}_{rs}$
 are powers of $rs$ by integers.
\par
\medskip
We also give in Theorem~\ref{thm:Nnlc}
a recursive formula for 
the polynomial $N(t)$.
We see that
the odd Betti numbers of $X$ are zero, and 
can calculate the even Betti numbers $b_{2i}(X)$ of $X$
via the formula
\begin{equation}\label{eq:NBetti}
N(t)={\sum_{i=0}^{\dim X}} b_{2i}(X)\,t^i.
\end{equation}
\begin{example}\label{example:Nn11second} 
The Betti numbers of $X[r, s]_{1}^1$ in Example~\ref{example:Nn11first} are 
\begin{equation*}\label{eq:bettiXn11}
b_{2i}=b_{2(n-2)-2i}=
\begin{cases}
i+1 & \textrm{if $i<n-2$,}\\
n-2+((rs)^{n}-1)/(rs-1) & \textrm{if $i=n-2$.}
\end{cases}
\end{equation*}
\end{example}
%
%
%
%
%
The number of rational points of the Deligne-Lusztig varieties has been studied 
by means of the representation theory
of algebraic groups over finite fields. 
See, for example, ~\cite{MR723216} and~\cite{MR747534}.
Our proof of Theorems~\ref{thm:Fss} and~\ref{thm:Nnlc}  does not use the representation theory,
and is entirely elementary.
\par
\medskip
%
Our next result is on the unirationality of the Frobenius incidence varieties.
A variety $Y$ defined over $\F_q$ is said to be 
\emph{purely-inseparably unirational over $\F_q$}
if there exists a purely inseparable dominant rational map $\P^N\ratmap Y$
defined over $\F_q$. 
\begin{theorem}\label{thm:unirational}
The Frobenius incidence variety $X$ is purely-inseparably unirational over $\F_{p}$.
\end{theorem}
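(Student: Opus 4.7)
The plan is to reduce to a situation where the variety is, birationally, a Grassmannian bundle over a rational base, via a Frobenius twist.

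\textbf{Step 1 (Frobenius reduction).}  Consider the morphism $\id\times \Fr\spar{s}$ on $\Gr_{n,l}\times \Gr_n^c$, namely $(L,M)\mapsto (L,M\sp{s})$.  If $(L,M)\in X[r,s]_l^c$ then $L\sp{rs}\subset M\sp{s}$ (apply $\Fr\spar{s}$ to $L\sp{r}\subset M$) and $L\subset M\sp{s}$, so the image lies in $X[rs,1]_l^c$.  Being the identity on the first factor and Frobenius on the second, this morphism $X[r,s]_l^c\to X[rs,1]_l^c$ is purely inseparable, bijective on $\bar\F_p$-points, and dominant.  Purely-inseparable unirationality is preserved under such morphisms (a suitable Frobenius iterate of the source factors through the morphism), so it suffices to prove the theorem for $X[q,1]_l^c$ where $q:=rs$.

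\textbf{Step 2 (Grassmannian-bundle structure).}  Since $X[q,1]_l^c=\shortset{(L,M)}{L+L\sp{q}\subset M}$, the projection $\pi_1\colon X[q,1]_l^c\to \Gr_{n,l}$ has generic fiber $\shortset{M}{L+L\sp{q}\subset M}$.  When $n\ge 2l+c$, for generic $L$ this fiber is the Grassmannian $\Gr(c,V/(L+L\sp{q}))\cong \Gr(c,n-2l)$, of dimension $c(n-2l-c)$.  Over the dense open locus where $\dim(L+L\sp{q})=2l$, $\pi_1$ is the Grassmannian bundle associated to the locally free quotient $V\tensor \OOO/(\LLL+\phi^{(q)*}\LLL)$, where $\LLL$ is the tautological rank-$l$ subbundle on $\Gr_{n,l}$.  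A Grassmannian bundle over a rational base is rational (the generic fiber is a Grassmannian over the function field of the base, hence rational).  Thus $X[q,1]_l^c$ is rational, so $X[r,s]_l^c$ is purely-inseparably unirational.  The symmetric reduction $X[r,s]_l^c\to X[1,rs]_l^c$ via $(L,M)\mapsto(L\sp{r},M)$, combined with $\pi_2$, handles the case $n\ge l+2c$.

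\textbf{Step 3 (main obstacle).}  The narrow range $l+c<n<\min(2l+c,\,l+2c)$---nonempty only when $\min(l,c)\ge 2$---is the main technical obstacle.  Here $\pi_1$ becomes birational onto the proper subvariety $\shortset{L}{\dim(L\cap L\sp{q})\ge 2l+c-n}$ of $\Gr_{n,l}$.  To establish its purely-inseparable unirationality, I would parametrize a generic $L$ in this image as $L=(\ell+\ell\sp{1/q})+L'$, where $\ell$ of dimension $k:=2l+c-n$ ranges over $\Gr_{n,k}$ and $L'$ is an $(l-2k)$-dimensional complement ranging over a Frobenius-twisted Grassmannian bundle; the purely inseparable contribution comes from $\ell\sp{1/q}$.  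Dominance onto the entire image and rationality of the parameter variety---with care taken over the Frobenius twist---is the delicate point.
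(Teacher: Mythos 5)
Your Steps 1 and 2 are sound and essentially parallel the paper's argument. The paper runs the Frobenius reduction in the opposite direction, exhibiting purely inseparable surjections $X[rs,1]\to X[r,s]$ and $X[1,rs]\to X[r,s]$ induced by $\Fr\spar{s}\times\id$ and $\id\times\Fr\spar{r}$, which makes the transfer of purely-inseparable unirationality immediate; your map $(L,M)\mapsto(L,M^s)$ goes the other way and needs the factorization of a Frobenius iterate through it, but that is standard and correct. Your Grassmannian-bundle argument for $2l+c\le n$ (and its dual for $l+2c\le n$) is the same in substance as the paper's explicit birational trivialization $\UUU\times\Gr_{V/K}^c\isom X[q,1]$.

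The gap is Step 3, and it is genuine: in the range $l+c<n<\min(2l+c,\,l+2c)$ (nonempty exactly when $\min(l,c)\ge 2$, as you say) you offer only a parametrization that you yourself flag as unverified, and which as stated does not even make sense in general: with $k=2l+c-n$ your complement $L\sprime$ would have dimension $l-2k=2n-3l-2c$, which is negative already for $l=c=3$, $n=7$, a legitimate instance of the theorem. The underlying difficulty is that $\ell=L\cap L^q$ need not satisfy $\ell\cap\ell^{1/q}=0$, so $\dim(\ell+\ell^{1/q})$ can be smaller than $2k$ and the incidence condition must be unwound recursively rather than in one step. The paper resolves precisely this by a descending induction on $2l+c$: when $2l+c>n$, the correspondence $(L,M)\mapsto(L\cap L^q,\,L)$, with inverse $(L\sprime,M\sprime)\mapsto(M\sprime,\,M\sprime+M\sp{\prime q})$, is birational over $\F_p$ from $X[q,1]^c_l$ to $X[1,q]^{n-l}_{2l+c-n}$ (using Proposition~\ref{prop:generic} to see that generically $L+L^q=M$ and $M\sprime\cap M\sp{\prime q}=L\sprime$), and the invariant $2l\sprime+c\sprime=(2l+c)-(n-l-c)$ strictly decreases, so iterating eventually lands in the rational base case. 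You need this reduction, or an equivalent device, to close Step 3; as written, your argument proves the theorem only when $\min(l,c)=1$ (in particular for $l=c=1$), not in the stated generality.
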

The relation of  supersingularity to unirationality has been observed in various cases.
 See Shioda~\cite{MR0374149} for the supersingularity of unirational surfaces, and 
 see  Shioda-Katsura~\cite{MR526513} and Shimada~\cite{MR1176080}
 for the unirationality of supersingular Fermat varieties.
\par
\medskip
From the defining equations~\eqref{eq:Mukai} of $X[r, s]_{1}^1$,
we see that $X[r, s]_{1}^1$ is a complete intersection of 
two varieties of unseparated flags~\cite{MR1247497},
or more specifically, of two unseparated incidence varieties~\cite[\S2]{MR1385284}.
Varieties of unseparated flags are classified in~\cite{MR1096262} and~\cite{MR1247497}.
Their pathological property  with respect to Kodaira vanishing theorem 
was studied in~\cite{MR1183809}.
\par
\medskip
Next we investigate algebraic cycles on the Frobenius incidence varieties. 
%
%
Let $\Lambda$ be an $\F_{rs}$-rational linear subspace of $V_{\F}:=V\tensor \F_{rs}$
such that $l\le \dim\Lambda\le n-c$.
We define a   subvariety
$\Sigma_\Lambda$ of $ \Gr_{n, l}\times \Gr_{n}^c$ 
by 
\begin{equation}\label{eq:defSigma}
\Sigma_\Lambda:=\Gr_{\Lambda, l}\times \Gr_{V_{\F}/\Lambda^{(r)}}^c.
\end{equation}
Then $\Sigma_\Lambda$ is defined over $\F_{rs}$ and,
for any field $F$ over  $\F_{rs}$,
we have 
$$
\Sigma_\Lambda(F)=
\set{(L, M)\in \Gr_{n, l}(F)\times \Gr_{n}^c (F)}%
{L\subset\Lambda\;\;\rmand\;\; \Lambda\sp{r}\subset M}.
$$
It follows from $\Lambda\sp{rs}=\Lambda$
that $\Sigma_\Lambda$ is contained in $X$.
In Theorem~\ref{thm:intnumb},
we calculate the intersection  of these 
 algebraic cycles $\Sigma_\Lambda$ in the Chow ring $A(X)$ of $X$.
 (See~\cite[App.~A]{MR0463157} or~\cite{MR1644323} for the definition of Chow rings.)
\par
\medskip
Applying  Theorem~\ref{thm:intnumb} to the case $l=c=1$,
we investigate the lattice generated by the numerical equivalence classes of 
middle dimensional algebraic cycles of 
$$
X^1_1=X[r,s]^1_1\subset \Gr_{V,1}\times \Gr_V^1=\P_*(V)\times \P^*(V).
$$
Note that, when  $l=c$, we have 
$2\dim \Sigma_\Lambda=\dim X[r, s]^l_l$ for any $\Lambda$.
%
%
%
Let 
 $A^{n-2} (X^1_1)$ denote the  Chow group
of  middle-dimensional algebraic cycles on $X^1_1$ over $\bar{\F}_{p}$.
For $i=1, \dots, n-1$,
let  $h_i$ be   the intersection of $X^1_1$
with $P_i\times P_{n-i}\subset \P_*(V)\times \P^*(V)$,
where $P_j$ is a general $j$-dimensional projective linear subspace
of $\P_*(V)$ or $\P^*(V)$.
Then $h_i$ is of middle-dimension on $X^1_1$.
We consider  the submodule 
$$
\latNtl (X^1_1)\;\subset\; A^{n-2} (X^1_1)
$$
generated  
by   $h_1, \dots, h_{n-1}$ 
and $\Sigma_{\Lambda}$ associated with 
all $\F_{rs}$-rational linear subspaces $\Lambda$ of  $V\tensor {\F_{rs}}$
such that   $1\le\dim\Lambda\le n-1$.
Then we have the intersection pairing on $\latNtl (X^1_1)$.
Let $\latNtl (X^1_1)\sperp$ denote  the  submodule of $\latNtl (X^1_1)$ consisting of  
$x\in \latNtl (X^1_1)$ such that $(x, y)=0$ holds for any  $y\in \latNtl (X^1_1)$.
We set
$$
\latN(X^1_1):=\latNtl (X^1_1)/\latNtl (X^1_1)\sperp.
$$
Then $\latN(X^1_1)$ is a finitely-generated free $\Z$-module equipped with 
the  non-degenerate intersection pairing
\begin{equation*}\label{eq:Npairing}
\latN(X^1_1)\times \latN(X^1_1) \to \Z.
\end{equation*}
Thus  $\latN(X^1_1)$ is a lattice.
In  Theorem~\ref{thm:NNN}, we describe  the rank and the discriminant  of this lattice.
As  a corollary of Theorem~\ref{thm:NNN}, we obtain the following:
\begin{corollary}\label{cor:cohring}
The $l$-adic cohomology ring of $X^1_1\tensor \bar{\F}_{rs}$ is generated by 
the cohomology classes of the algebraic cycles $\Sigma_{\Lambda}$ and the image 
of the restriction homomorphism from the cohomology ring of $\P_*(V)\times \P^*(V)$.
\end{corollary}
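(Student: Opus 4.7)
The strategy is to analyze the $l$-adic cohomology of $X^1_1\tensor \bar{\F}_{rs}$ graded piece by graded piece, using weak Lefschetz below the middle, the rank computation of Theorem~\ref{thm:NNN} at the middle, and hard Lefschetz above the middle; since the odd Betti numbers of $X^1_1$ vanish by Example~\ref{example:Nn11second}, only the even-degree pieces need to be handled.

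First, $X^1_1$ sits inside $\P_*(V)\times \P^*(V)=\P^{n-1}\times\P^{n-1}$ as a smooth complete intersection cut out by divisors of bidegrees $(r,1)$ and $(1,s)$, both of which are ample. The $l$-adic weak Lefschetz theorem (valid in positive characteristic) applied successively to these two divisors yields that the restriction $H^i(\P_*(V)\times\P^*(V),\Q_l)\to H^i(X^1_1,\Q_l)$ is an isomorphism for every $i<\dim X^1_1=2(n-2)$. Consequently, in every degree strictly below the middle, each cohomology class on $X^1_1$ is a restriction from the ambient product.

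At the middle degree $2(n-2)$, I would invoke Theorem~\ref{thm:NNN}, which exhibits a lattice $\latN(X^1_1)$ of prescribed rank generated by $h_1,\dots,h_{n-1}$ (themselves ambient restrictions) together with the classes of all the cycles $\Sigma_\Lambda$. Comparing this rank with the middle Betti number $b_{2(n-2)}(X^1_1)$ from Example~\ref{example:Nn11second} shows the two integers coincide. Because the intersection pairing on $\latN(X^1_1)$ is non-degenerate by construction, the natural cycle class map $\latN(X^1_1)\tensor\Q_l\to H^{2(n-2)}(X^1_1,\Q_l)$ is injective, and an equality of dimensions forces it to be surjective. Hence the middle cohomology is spanned by $h_1,\dots,h_{n-1}$ and the $\Sigma_\Lambda$.

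Finally, for $i>2(n-2)$, hard Lefschetz (Deligne, Weil~II) supplies an isomorphism $L^{i-2(n-2)}\colon H^{4(n-2)-i}(X^1_1,\Q_l)\xrightarrow{\sim} H^i(X^1_1,\Q_l)$, where $L$ is the first Chern class of an ample line bundle restricted from the ambient, for instance $c_1(\OOO(1,1))$. The source sits strictly below the middle and is therefore already generated by ambient restrictions by the previous step, and multiplying by the ambient class $L$ keeps us inside the subring of ambient restrictions. Combining the three ranges, every $H^{2i}(X^1_1,\Q_l)$ lies in the subring generated by the $\Sigma_\Lambda$ and by restrictions from $\P_*(V)\times\P^*(V)$, proving the corollary. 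The decisive input is Theorem~\ref{thm:NNN}: the weak and hard Lefschetz steps are standard, so I expect the real labour to lie in the intersection-theoretic computations that certify $\latN(X^1_1)$ attains the full middle Betti rank.
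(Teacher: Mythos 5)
Your proof is correct and follows essentially the same route as the paper: weak Lefschetz for the smooth complete intersection $X^1_1\subset \P_*(V)\times\P^*(V)$ in degrees below the middle, then Theorem~\ref{thm:NNN} (rank of $\latN(X^1_1)$ equal to $b_{2(n-2)}$ plus non-degeneracy of the pairing) to show the cycle classes span the middle cohomology. The paper leaves the degrees above the middle implicit, which you fill in with hard Lefschetz; this is only a matter of detail, not of method.
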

%
%
%
%
In Theorem~\ref{thm:NNN}, it is shown that 
the discriminant of $\latN(X^1_1)$ is a power of $p$.
This fact is 
an analogue of the theorem  on
the discriminant of 
the  N\'eron-Severi lattice of a supersingular $K3$ surface
(in the sense of Shioda)
proved by  
Artin~\cite{MR0371899} and  Rudakov-Shafarevich~\cite{MR633161}.
See also~\cite{MR1794260}
for a similar result on the Fermat variety of degree $q+1$.
%
%
%
\par
\medskip
For $x\in \latNtl(X^1_1)$, let $[x]\in \latN(X^1_1)$ denote the class of $x$
modulo $\latNtl(X^1_1)\sperp$.
We define the primitive part $\Nprim(X^1_1)$ of  $\latN(X^1_1)$ by 
$$
\Nprim(X^1_1):=\set{[x]\in \latN(X^1_1)}{([x], [h_i])=0\;\;\textrm{for}\;\; i=1, \dots, n-1}.
$$
For a lattice $L$, let $[-1]^\nu L$ denote the lattice obtained from $L$
by multiplying the symmetric bilinear form 
with $(-1)^\nu$.

\begin{theorem}\label{thm:NNNprim}
The intersection pairing on $\Nprim(X^1_1)$ is non-degenerate.
The lattice $[-1]^n\Nprim(X^1_1)$ is  positive-definite
of rank 
$|\P^{n-1}(\F_{rs})|-1$.
\end{theorem}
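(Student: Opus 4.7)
The plan is to reduce Theorem~\ref{thm:NNNprim} to the explicit calculation of a single self-intersection number $a := (\Sigma_\Lambda,\Sigma_\Lambda)_{X^1_1}$ for a $1$-dimensional $\F_{rs}$-rational subspace $\Lambda\subset V$. The theorem then follows from three outputs: (a) a Gram matrix computation showing that the sublattice $H := \Z\langle[h_1],\ldots,[h_{n-1}]\rangle$ of $\latN(X^1_1)$ is non-degenerate of rank $n-1$, forcing $\Nprim(X^1_1) = H\sperp$ to be non-degenerate of the claimed rank; (b) a $\Q$-basis of $\Nprim(X^1_1)\tensor\Q$ consisting of differences $\sigma_P-\sigma_{P_0}$ of $\pi_1$-fibers, in which the intersection form has Gram matrix $a(I+J)$; and (c) the sign determination $\mathrm{sign}(a)=(-1)^n$.

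For (a), push forward to the ambient and use $[X^1_1]=(rH_1+H_2)(H_1+sH_2)$ together with $\iota_*[h_i]=[X^1_1]\cdot H_1^{n-1-i}H_2^{i-1}$ to obtain, via the projection formula,
\[
([h_i],[h_j])_{X^1_1}=\begin{cases}r & i+j=n+1,\\ rs+1 & i+j=n,\\ s & i+j=n-1,\\ 0 & \textrm{otherwise.}\end{cases}
\]
A short induction shows this Gram matrix is non-singular. Since $\latN(X^1_1)$ is non-degenerate by Theorem~\ref{thm:NNN}, the orthogonal complement $\Nprim(X^1_1)=H\sperp$ is non-degenerate and of rank $b_{2(n-2)}(X^1_1)-(n-1)=|\P^{n-1}(\F_{rs})|-1$ by Example~\ref{example:Nn11second}.

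For (b), for each $P\in\P^{n-1}(\F_{rs})$ let $\Lambda_P\subset V$ denote the corresponding line and set $\sigma_P:=[\Sigma_{\Lambda_P}]\in\latN(X^1_1)$; by~\eqref{eq:defSigma}, $\Sigma_{\Lambda_P}$ is the $(n-2)$-dimensional special fiber $\pi_1\inv(P)$ of the projection $\pi_1:X^1_1\to\P_*(V)$. Distinct such fibers are disjoint, so $(\sigma_P,\sigma_{P'})=0$ for $P\ne P'$, and a similar projection-formula calculation gives $([h_i],\sigma_P)=\delta_{i,n-1}$. Setting $a:=(\sigma_P,\sigma_P)$ --- independent of $P$ by the transitive $\GL_n(\F_{rs})$-action on $\P^{n-1}(\F_{rs})$ --- the differences $\sigma_P-\sigma_{P_0}$ (for fixed $P_0$, $P\ne P_0$) lie in $\Nprim(X^1_1)$ and have Gram matrix $a(I+J)$, where $J$ is the all-ones matrix of size $N-1$ with $N:=|\P^{n-1}(\F_{rs})|$. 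Since $I+J$ is positive-definite (eigenvalues $1,\ldots,1,N$), once we know $a\ne0$ these $N-1$ differences are $\Q$-linearly independent, and since $\rank\Nprim(X^1_1)=N-1$, they therefore form a $\Q$-basis of $\Nprim(X^1_1)\tensor\Q$. The intersection form on $\Nprim(X^1_1)$ is then definite of sign $\mathrm{sign}(a)$.

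The main obstacle is (c): pinning down $\mathrm{sign}(a)=(-1)^n$. I plan to extract $a$ explicitly from Theorem~\ref{thm:intnumb}, which describes the intersection products among the cycles $\Sigma_\Lambda$ in $A(X^1_1)$, and to read off the sign from the resulting polynomial in $r,s,n$. As consistency checks: for $n=3$, $X^1_1$ is the supersingular $K3$ surface of the introduction, $\sigma_P$ is a smooth rational $(-2)$-curve, and $a=-2$, matching $(-1)^3$; for $n=4$, one computes $a>0$, matching $(-1)^4$. A more conceptual alternative is to lift $X^1_1$ to a complex model (available because~\eqref{eq:Mukai} has integer coefficients) and invoke the Hodge--Riemann bilinear relations: the sign of the Hodge--Riemann form on the primitive $(n-2,n-2)$-part of the middle cohomology of a smooth projective complex variety of dimension $2(n-2)$ is precisely $(-1)^{n-2}=(-1)^n$, and the $l$-adic/singular comparison isomorphism would transfer this sign to $\Nprim(X^1_1)$ --- the subtle technical point being to verify that the characteristic-$p$ classes in $\Nprim(X^1_1)$ land in the $(n-2,n-2)$-Hodge component on the complex lift.
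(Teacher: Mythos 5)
Your proposal is correct and follows essentially the same route as the paper: the paper also shows $\HHH=\gen{[h_1],\dots,[h_{n-1}]}$ is non-degenerate (via Lemma~\ref{lem:detA}, $\det A_{\HHH}=\pm f(n)$), takes the differences $[D_{\ell(P)}]=[\Sigma_{\ell(P)}]-[\Sigma_{\ell(P_0)}]$ as a finite-index sublattice of $\Nprim(X^1_1)$, and reads off definiteness from the Gram matrix $a(I+J)$ with $a=(\Sigma_{\ell(P)},\Sigma_{\ell(P)})=(-s)^{n-2}$. The one step you leave as a plan --- extracting $a$ from Theorem~\ref{thm:intnumb} --- is exactly how the paper proceeds (Corollary~\ref{cor:intnumb2} gives the coefficient of $y^{n-2}$ in $(1+sy)^{-1}(1+y)$, i.e.\ $(-s)^{n-2}$, recorded in~\eqref{eq:SigmaSigma}), so no Hodge-theoretic lift is needed.
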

In the last section,  
our construction is applied to the sphere packing problem.
We investigate the case $n=4$, and study
the positive-definite lattice $\Nprim (X[2,2]^1_1)$ of 
the $4$-dimensional  Frobenius incidence variety $X[2,2]^1_1$.
%
%
%
%
%
\begin{theorem}\label{thm:Nprim}
Suppose that $n=4$.
The lattice  $\Nprim(X[2,2]^1_1)$ is an even positive-definite lattice of rank $84$,
 with discriminant $85\cdot 2^{16}$,
 and with minimal  norm  $8$.
\end{theorem}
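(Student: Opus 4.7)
The plan is to specialize Theorems~\ref{thm:NNN}, \ref{thm:intnumb}, and \ref{thm:NNNprim} to the case $n=4$, $r=s=2$, and verify the four numerical assertions separately. The rank and positive-definiteness follow immediately from Theorem~\ref{thm:NNNprim}: since $(-1)^n=1$ for $n=4$, the lattice $\Nprim(X[2,2]^1_1)$ itself is positive-definite, of rank $|\P^3(\F_4)|-1=84$.

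For the discriminant, I would work inside the orthogonal decomposition $\latN(X^1_1)\otimes\Q = \Q\ang{h_1,h_2,h_3}\oplus\Nprim(X^1_1)\otimes\Q$. The Gram matrix of the sublattice $H\subset\latN(X^1_1)$ spanned by $h_1,h_2,h_3$ is computed directly by intersection theory on $\P_*(V)\times\P^*(V)$: one expands $[X]=(2H_1+H_2)(H_1+2H_2)$ in the Chow ring of $\P_*(V)\times\P^*(V)$ and pairs with the monomials $H_1^{3-i}H_2^{i-1}$ representing $h_i$, obtaining an explicit $3\times 3$ Gram matrix of absolute determinant $85$. Combining this with $\disc(\latN(X^1_1))$ from Theorem~\ref{thm:NNN} (a power of $2$) and the orthogonal-complement relation
\[
|\disc(\latN(X^1_1))|\cdot [\latN(X^1_1):H'\oplus\Nprim(X^1_1)]^2 = |\disc(H')|\cdot|\disc(\Nprim(X^1_1))|,
\]
where $H'\supset H$ denotes the primitive closure of $H$ in $\latN(X^1_1)$, yields the value $|\disc(\Nprim(X^1_1))|=85\cdot 2^{16}$.

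For evenness, Theorem~\ref{thm:intnumb} provides each intersection number $(\Sigma_\Lambda,\Sigma_{\Lambda'})$ as an explicit function of the relative position of $\Lambda$ and $\Lambda'$. A finite case analysis shows $\Sigma_\Lambda\cdot\Sigma_\Lambda$ is even for every $\F_4$-rational subspace $\Lambda$ of $V\tensor\F_4$; together with integrality of all pairings (which reduces evenness of a lattice to evenness of the self-intersections of a generating set), this yields evenness of the integral span of the $\Sigma_\Lambda$ and the $h_i$, and hence of its orthogonal summand $\Nprim(X^1_1)$.

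The main obstacle is determining the minimal norm exactly. An explicit vector of norm~$8$ can be produced by taking a short integer combination of classes $[\Sigma_\Lambda]$ for suitably chosen $\Lambda$, projecting to $\Nprim(X^1_1)$, and evaluating its norm via Theorem~\ref{thm:intnumb}. Ruling out norms $2$, $4$, and $6$ is the difficult direction. I would exploit the $\GL_4(\F_4)$-action on $V\tensor\F_4$, which acts by isometries on $\Nprim(X^1_1)$ and partitions short vectors into a manageable number of orbits whose representatives can be tested one by one. Given the moderate rank~$84$ and the factorization $85\cdot 2^{16}$ of the discriminant (which dictates the elementary divisors and hence the $2$-adic structure of the lattice), a direct computer-algebra verification from an explicit Gram matrix of generators is the expected endgame.
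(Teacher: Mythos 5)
There are two genuine gaps, both in the parts you yourself flag as the substance of the theorem. First, the evenness argument fails at its premise: for $n=4$, $r=s=2$ the self-intersection $\Sigma_\Lambda\cdot\Sigma_\Lambda$ is \emph{not} even for every $\Lambda$. By Corollary~\ref{cor:intnumb2} with $\dim\Lambda=2$ (so $\intdim=\sumcodim=2$) one gets $\Sigma_\Lambda\cdot\Sigma_\Lambda=5$ (cross-check: two disjoint lines give $(\Sigma_\Lambda-\Sigma_{\Lambda'})^2=10$ with $\Sigma_\Lambda\cdot\Sigma_{\Lambda'}=0$). Likewise $(h_2,h_2)=1+rs=5$ by~\eqref{eq:hh}, so the integral span of the $\Sigma_\Lambda$ and the $h_i$ is certainly not even, and $\latN(X)$ is not the orthogonal direct sum of $\HHH$ and $\Nprim(X)$ in any case. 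Evenness has to be verified on generators of $\Nprim(X)$ itself, i.e.\ on the differences $[\Sigma_\Lambda]-[\Sigma_{\Lambda\sprime}]$ of~\eqref{eq:Sigmadiff} --- and even that presupposes knowing that these differences generate all of $\Nprim(X)$, which is exactly the equality $\NSigma(X)=\Nprim(X)$ that the paper establishes by comparing the two discriminants ($85\cdot2^{16}$ on both sides). Your discriminant argument, as stated, is also underdetermined: Theorem~\ref{thm:NNN} only gives a \emph{divisor bound} on $\disc\latN(X)$, and the index $[\latN(X):H\sprime\oplus\Nprim(X)]$ is unknown, so the orthogonal-complement identity cannot be closed without computing explicit Gram matrices anyway --- which is what the paper does directly for $\NSigma(X)$ and for $\HHH\sperp$ inside $(\HHH+\MMM_0)\dual$.

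Second, and more seriously, you have no workable method for the lower bound $\Nmin\ge 8$. Enumerating short vectors of a rank-$84$ lattice ``from an explicit Gram matrix'' is not a routine computation, and the $\GL_4(\F_4)$-action does not help: it acts on the lattice, but it gives you no a priori list of candidate short vectors to partition into orbits --- the whole difficulty is to prove that list is empty below norm $8$. The paper's essential device, which is missing from your proposal, is to pass to the overlattice $\MMM_{\CCC}=\NSigma(X)+2\MMM$ of rank $85$ inside $\MMM\dual\cong\Z^{85}$ (with form $\frac14\cdot$standard), view $\MMM_{\CCC}$ as the preimage of a code $\CCC$ over $\Z/8\Z$, filter $\CCC$ by the kernels $\KKK_\nu$ of reduction mod $2^\nu$, and bound the norm of a putative short vector layer by layer using the minimum Hamming weights of the binary codes $\Gamma_\nu=\KKK_\nu/\KKK_{\nu+1}$; the $60$-dimensional layer $\Gamma_1$ is handled through the weight enumerator of its $25$-dimensional dual via the MacWilliams identity. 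Both evenness and $\Nmin\ge8$ then follow simultaneously from the embedding $\Nprim(X)=\NSigma(X)\subset\MMM_{\CCC}$. Without this (or some comparable coding-theoretic or modular-forms input), the hard direction of the theorem remains unproved.
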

In particular, the normalized center density of $\Nprim(X[2,2]^1_1)$  is 
$2^{34}/\sqrt{85}=2^{30.795...}$,
while the Minkowski-Hlawka bound at
rank $84$ 
is $2^{17.546...}$.
See \S\ref{sec:dense}
for the definition of normalized center density and Minkowski-Hlawka bound.

\par
\medskip
In the proof of Theorem~\ref{thm:Nprim},
we  construct another  positive-definite lattice $\MMM_{\CCC}$ of rank $85$
associated with a code $\CCC$ over $\Z/8\Z$.
The normalized  center  density $2^{32.5}$ of $\MMM_{\CCC}$ is also larger than 
the  Minkowski-Hlawka bound $2^{18.429\dots}$ at rank $85$.
See~Theorem~\ref{thm:MCCC}.
\subsection{The plan of this paper}
The  proofs of these results are  given as follows. 
In~\S\ref{sec:betti}, we show that the Frobenius incidence variety $X$ is smooth 
in Proposition~\ref{prop:FFF},
and give a recursive formula for the number of
$\F_{(rs)^\nu}$-rational points of $X$ in Theorem~\ref{thm:Nnlc}.
Proposition~\ref{prop:smoothdim} and 
Theorem~\ref{thm:Fss} follow from these results.
In~\S\ref{sec:unirational}, 
we show that $X$ is purely-inseparably  unirational.
In~\S\ref{sec:intnumb},  
we give a formula for 
the intersection  of the algebraic cycles 
$\Sigma_{\Lambda}$ in the Chow ring of  $X$. 
In~\S\ref{sec:latticeN},
we study the case  where  $l=c=1$,
and prove Corollary~\ref{cor:cohring} and Theorem~\ref{thm:NNNprim}.
In the last section,
we study the case $n=4$, $l=c=1$, $r=s=2$,  and prove Theorem~\ref{thm:Nprim}.
\section{Number of rational points and  the Betti numbers}\label{sec:betti}
In this section,
we prove Proposition~\ref{prop:smoothdim} and Theorem~\ref{thm:Fss}.
\par
\medskip
It is useful to note that the defining property~\eqref{eq:setX} of 
the Frobenius incidence variety $X=X[r,s]_l^c$
is rephrased as follows:
\begin{equation}\label{eq:setX2}
\renewcommand{\arraystretch}{1.2}
\begin{array}{ccl}
X(F)&=&
\set{(L, M)\in {\Gr}_{n,l}(F)\times {{{\Gr}_n^c}}(F)}%
{L\sp{r}\subset M \;\;\rmand\;\; L\subset M\sp{s}}\\
&=&
\set{(L, M)\in {\Gr}_{n,l}(F)\times {{{\Gr}_n^c}}(F)}%
{L+L\sp{rs}\subset M\sp{s}}\\
&=&
\set{(L, M)\in {\Gr}_{n,l}(F)\times {{{\Gr}_n^c}}(F)}%
{L\sp{r}\subset M\cap M\sp{rs}}.
\end{array}
\end{equation}
\par
\medskip
We denote by 
$$
S_{n,l}\to{\Gr}_{n,l}\quand Q_n^c\to{{\Gr}_n^c}
$$
 the universal  subbundle of $V\tensor\OOO\to {\Gr}_{n,l}$
and  the universal quotient bundle of $V\tensor\OOO\to {{\Gr}_n^c}$,
respectively.
We consider the  vector bundle
$$
\EEE:=\sheafHom(\pr^* ({S_{n,l}}), \pr^*({Q_{n}^c}))\to\GGlc
$$
of rank $lc$,
where $\pr$ denotes the projections $\GGlc\to{\Gr}_{n,l}$ and $\GGlc\to{{\Gr}_n^c}$.
Let $\gamma: \GGlc\to \EEE$ denote the section of $\EEE$
corresponding to the canonical homomorphism
$$
\pr^* ({S_{n,l}})\inj V\tensor\OOO_{\GGlc} \surj \pr^*({Q_{n}^c}).
$$ 
We then put
\begin{equation}\label{eq:defFFF}
\FFF:=(\Fr\spar{r} \times \id )^*\EEE\oplus ( \id \times  \Fr\spar{s})^*\EEE,
\end{equation}
which is a vector bundle over $\GGlc$ of rank $2lc$ that has  a canonical section
$$
\tilde\gamma:=(\,(\Fr\spar{r} \times \id )^*\gamma, \, ( \id \times  \Fr\spar{s})^*\gamma\,)\;\;:\;\; \GGlc\to \FFF.
$$
Since the incidence variety $\Incvar$ is defined on $\GGlc$ by $\gamma=0$,
the subscheme $X$ of $\GGlc$ is defined by $\tilde\gamma=0$.
\begin{proposition}\label{prop:FFF}
The section $\tilde\gamma$ intersects the zero section of $\FFF$ transversely
in the total space of $\FFF$.
In particular,
the scheme $X$ is smooth of dimension 
$$
\dim(\GGlc)-2lc=(l+c)(n-l-c).
$$
\end{proposition}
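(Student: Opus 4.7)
The plan is to verify transversality of $\tilde\gamma$ to the zero section of $\FFF$ pointwise; by the usual regular-section criterion, this gives smoothness and codimension $2lc$ in $\GGlc$, so
\[
\dim X \;=\; l(n-l)+c(n-c)-2lc \;=\; (l+c)(n-l-c).
\]
At a geometric point $(L,M)\in X$ the tangent space splits canonically as
\[
T_{(L,M)}(\GGlc) \;=\; \Hom(L,\, V/L) \,\oplus\, \Hom(M,\, V/M),
\]
and using $\EEE_{(L',M')} = \Hom(L', V/M')$ in the definition~\eqref{eq:defFFF} of $\FFF$,
\[
\FFF_{(L,M)} \;=\; \Hom(L^r,\, V/M)\, \oplus\, \Hom(L,\, V/M^s).
\]
Since $L^r\subset M$ and $L\subset M^s$ on $X$, both components of $\tilde\gamma$ vanish at $(L,M)$, so $d\tilde\gamma_{(L,M)}$ is intrinsically defined.

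The heart of the argument is that the absolute Frobenius $\Fr\spar{q}$ induces the zero map on Zariski tangent spaces in characteristic $p$. Applying the chain rule $d(f^*\gamma) = d\gamma \circ df$ at a zero of $\gamma$, the Frobenius factor of each twisted section kills one of the two tangent summands. Concretely, $d\bigl((\Fr\spar{r}\times\id)^*\gamma\bigr)$ vanishes on $\Hom(L,V/L)$, and on $\Hom(M,V/M)$ it equals, up to sign, the restriction map $\beta\mapsto \beta|_{L^r}$ into $\Hom(L^r, V/M)$; symmetrically, $d\bigl((\id\times\Fr\spar{s})^*\gamma\bigr)$ vanishes on $\Hom(M,V/M)$, and on $\Hom(L,V/L)$ it equals the map $\alpha\mapsto \bar\alpha$ into $\Hom(L, V/M^s)$ induced by the canonical surjection $V/L \surj V/M^s$ (which is well defined because $L\subset M^s$). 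Both of these auxiliary maps are surjective: the first because any linear map from the subspace $L^r\subset M$ extends to $M$, the second because the surjection $V/L\surj V/M^s$ splits.

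Consequently $d\tilde\gamma_{(L,M)}$ has block anti-diagonal shape — its first output depends only on the $\beta$-variable, its second only on the $\alpha$-variable — with each block surjective, so the whole map surjects onto $\FFF_{(L,M)}$ and transversality holds.

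The main technical point to justify carefully is this zero-derivative behaviour of the Frobenius twists: it is precisely what forces the two components of $\tilde\gamma$ to probe disjoint tangent directions, so that their joint vanishing locus becomes a transverse complete intersection rather than an excess-dimensional scheme. Everything else is linear algebra in the fibers.
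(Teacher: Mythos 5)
Your argument is essentially the paper's proof: both identify $T_{(L,M)}(\GGlc)$ with $\Hom(L,V_F/L)\oplus\Hom(M,V_F/M)$, use that Frobenius induces the zero map on tangent spaces, and conclude that the two components of $d\tilde\gamma$ constrain the two summands separately (the paper phrases this as the kernel being cut out by the two ``triangle'' conditions $\beta|_{L^r}=0$ and $\bar\alpha=0$ and counts its dimension, while you check surjectivity of the two anti-diagonal blocks directly --- equivalent by rank--nullity).

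One small omission: the definition allows $r=1$ or $s=1$ (only ``$r>1$ or $s>1$'' is required, and $\Fr^{(1)}=\id$), and in that case one of the two sections is the untwisted $\gamma$, whose differential is the full commutative-square condition involving \emph{both} $\alpha$ and $\beta$; your claim that $d\tilde\gamma$ is block anti-diagonal then fails. The conclusion survives because the differential is still block triangular (the untwisted component is surjective in $\beta$ for fixed $\alpha$, by the same extension argument), but the paper treats these cases explicitly and your write-up should too.
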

\begin{proof}
It is enough to show that,
for any field $F$ of characteristic $p$,
the  tangent space to $X$ at an arbitrary $F$-valued point 
of $X$ is of dimension $(l+c)(n-l-c)$.

Let $(L,M)$ be an $F$-valued point of $\Incvar$.
Then the tangent space to 
$\GGlc$ at $(L, M)$ is canonically identified with
the linear space
$$
T(L, M):=\Hom(L, V_F/L)\oplus \Hom(M, V_F/M),
$$
and  the tangent space to 
$\Incvar$ at $(L, M)$ is identified with
the linear subspace of
$T(L, M)$ 
consisting of pairs $(\alpha, \beta)\in T(L, M)$
that make the following diagram commutative:
\begin{equation}\label{eq:squarealphabeta}
\renewcommand{\arraystretch}{1.2}
\begin{array}{ccc}
L&\inj  & M\\
\lower2pt \hbox{${}^\alpha$}\downarrow\phantom{\hbox{${}^\alpha$}} && 
\phantom{\hbox{${}^\beta$}}\downarrow\lower2pt \hbox{${}^\beta$} \\
 {V_F}/L&\surj &{V_F}/M,
\end{array}
\end{equation}
where the horizontal arrows are the natural linear maps.

We now let  $(L,M)$ be an $F$-valued point of $X$.
Note that 
the Frobenius morphism induces the  zero map on the tangent space.
Suppose that $r>1$ and $s>1$.
Then 
the tangent space to $X$ at $(L, M)$ is  identified with
the linear subspace of $T(L, M)$ consisting of  pairs $(\alpha, \beta)$
that make
both  triangles 
\begin{equation*}\label{eq:triangle}
\renewcommand{\arraystretch}{1.2}
(\textrm{T}_\beta)\;\;\;\;\;\;\;
\begin{array}{ccc}
L\sp{r}&\inj  & M\\
\lower4pt \hbox{${}^0$}\searrow \hskip -25pt && 
\phantom{\lower4pt \hbox{${}^\beta$}}\downarrow{\lower4pt \hbox{${}^\beta$}}\\
& &{V_F}/M
\end{array}
\quad\quad\quand\quad\quad
(\textrm{T}_\alpha)\;\;\;\;\;\;\;
\renewcommand{\arraystretch}{1.2}\begin{array}{ccc}
L& & \\
\lower2pt \hbox{${}^\alpha$} \downarrow\phantom{\lower2pt \hbox{${}^\alpha$}}
 & \hskip 0pt \searrow  \hskip 5pt  \lower2pt \hbox{${}^0$} \hskip -25pt&\\
{V_F}/L&\surj &{V_F}/M\sp{s}
\end{array}
\end{equation*}
commutative.
Suppose that $r=1$ and $s>1$
(resp.~$r>1$ and $s=1$).
Then  the tangent space to $X$ at $(L, M)$ is  identified with
the linear subspace  of  pairs $(\alpha, \beta)$
that make both of~\eqref{eq:squarealphabeta} and $(\textrm{T}_\alpha)$ 
(resp.~\eqref{eq:squarealphabeta} and~$(\textrm{T}_\beta)$) commutative.
In each case, one easily checks that 
the dimension of 
the tangent space  is  $(l+c)(n-l-c)$.
\end{proof}
Next we count the number of $\F_{(rs)^\nu}$-rational points of $X$.
In order to state the result,
we need to introduce several polynomials.
\par
\medskip
For each integer $l$ with $0\le l\le n$, 
we define 
a polynomial $g_{n, l} (x)=g_n^{n-l}(x)\in \Z[x]$ of degree $l(n-l)$ by
$$
g_{n, l} (x)=g_n^{n-l}(x):=\frac{\prod_{i=0}^{l-1} (x^n-x^i)}{\prod_{i=0}^{l-1} (x^l-x^i)}. 
$$
Note that $g_{n, l} (x)$ is monic if $l(n-l)>0$,
while $g_{n, 0} (x)=g_{n,n}(x)=1$.
We also put
$$
g_{n, l} (x)=g_n^{n-l}(x):=0\;\;\textrm{for \,$l<0$\, or\,  $l>n$.} 
$$
Then 
the number of $\F_{q^\nu}$-rational points of 
${\Gr}_{n,l}=\Gr_n^{n-l}$ is equal to $g_{n, l} (q^\nu)=g_n^{n-l} (q^\nu)$.
Let  $\succ $  denote the lexicographic order on the set of
pairs $(l, d)$ of  non-negative integers $l$ and $d$:
\begin{equation*}\label{eq:ord}
(l, d)\succ  (l\sprime, d\sprime)\;\;\Longleftrightarrow\;\;
l>l\sprime \;\textrm{or}\; (l=l\sprime \sqand d>d\sprime).
\end{equation*}
By descending induction with respect to 
$\succ$, 
we define  polynomials $\tau_{l,d}(x, y) \in \Z[x,y]$ 
as follows:
\begin{equation}\label{eq:tauini}
\tau_{ l,  d} (x,y):=
\begin{cases}
0 & \textrm{if $l>n$ or $d>l$,} \\
g_{n, l}(x) & \textrm{if $d=l\le n$,} 
\end{cases}
\end{equation}
and, for $d<l\le n$, by
\begin{equation}\label{eq:taurec}
\tau_{l,  d} (x,y):=
\sum_{u=l}^{2l-d} \tau_{2l-d, u} (x, y) \cdot g_{u, l}(y)-
\sum_{t=d+1}^l \tau_{ l,  t} (x, y) \cdot g_{n-2l+t, t-d}(y).
\end{equation}
Finally, for positive integers $l$ and $c$ with $l+c<n$,  we put
\begin{equation}\label{eq:NX}
\Nlc(x, y):=\sum_{d=0}^l \tau_{ l,  d} (x, y)\cdot g_{n-2l+d}^c(y)\; \in\; \Z[x,y].
\end{equation}
The main result of this section is as follows:
\begin{theorem}\label{thm:Nnlc}
The polynomial $\Nlc(x, y)$ is monic of degree $(l+c)(n-l-c)$ with respect to 
the variable $y$,  
and
the number of $\F_{(rs)^\nu}$-rational points of 
$X=X[r,s]^c_l$ is equal to $\Nlc(rs, (rs)^\nu)$.
\end{theorem}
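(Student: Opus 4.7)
Set $q := rs$; the plan is to count $|X(\F_{q^\nu})|$ by fibering the first projection $\pi_1 \colon X \to \Gr_{n,l}$ and stratifying the base by the Frobenius invariant $d := \dim(L \cap L^q)$ of $L \in \Gr_{n,l}(\F_{q^\nu})$.

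\emph{Fiber over $L$.} By the reformulation~\eqref{eq:setX2}, $(L,M) \in X(\F_{q^\nu})$ iff $L^r \subset M$ and $L^r \subset M^q$. Since $M$ is $\F_{q^\nu}$-rational, the second condition is equivalent to $L^{r q^{\nu-1}} \subset M$, so $M$ must contain $W_L := L^r + L^{r q^{\nu-1}}$. From $(L \cap L^{q^{\nu-1}})^q = L^q \cap L$ one reads $\dim(L \cap L^{q^{\nu-1}}) = d$, hence $W_L$ is an $\F_{q^\nu}$-rational subspace of dimension $2l - d$, and the number of $M \in \Gr_n^c(\F_{q^\nu})$ containing it equals $g_{n-2l+d}^c(q^\nu)$ (zero unless $2l-d \leq n-c$). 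Writing $T_{l,d}(\nu) := \#\{L \in \Gr_{n,l}(\F_{q^\nu}) : \dim(L \cap L^q) = d\}$ and summing over $L$ yields
\[
|X(\F_{q^\nu})| \;=\; \sum_{d=0}^{l} T_{l,d}(\nu) \cdot g_{n-2l+d}^c(q^\nu),
\]
so by~\eqref{eq:NX} it suffices to prove $T_{l,d}(\nu) = \tau_{l,d}(q, q^\nu)$.

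\emph{Base cases and recursion for $T_{l,d}$.} For $d = l$, $\dim(L \cap L^q) = l$ iff $L = L^q$ iff $L$ is $\F_q$-rational, so $T_{l,l}(\nu) = g_{n,l}(q)$ is $\nu$-independent; also $T_{l,d} = 0$ when $l > n$ or $d > l$. These match~\eqref{eq:tauini}. For the recursion, double-count
\[
\Omega_{l,d} \;:=\; \bigl\{\, (W,L) \,\bigm|\, W \in \Gr_{n, 2l-d}(\F_{q^\nu}),\; L \subset W \cap W^{q^{\nu-1}},\; \dim L = l \,\bigr\}.
\]
Grouping by $W$ and stratifying by $u := \dim(W \cap W^q) = \dim(W \cap W^{q^{\nu-1}})$, one gets $|\Omega_{l,d}| = \sum_{u=l}^{2l-d} T_{2l-d,u}(\nu) \cdot g_{u,l}(q^\nu)$, the lower bound $u \geq l$ being forced by $L^q \subset W \cap W^q$. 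Grouping by $L$, the equivalence $L \subset W^{q^{\nu-1}} \Leftrightarrow L^q \subset W$ shows that $L \subset W \cap W^{q^{\nu-1}}$ is the same as $L + L^q \subset W$; stratifying by $t := \dim(L \cap L^q) \geq d$, the $(2l-d)$-dimensional $W \supset L + L^q$ in $V$ number $g_{n-2l+t, t-d}(q^\nu)$. Equating the two expressions and isolating the $t = d$ summand (in which $g_{n-2l+d, 0} = 1$) reproduces~\eqref{eq:taurec} exactly.

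\emph{Monicity and main obstacle.} By reverse induction with respect to the order $\succ$ of~\eqref{eq:ord} starting from $\tau_{l,l}$, the identity $T_{l,d}(\nu) = \tau_{l,d}(q, q^\nu)$ propagates and establishes both polynomiality in $q^\nu$ and the claimed recursion. For the leading $y$-term of $N_l^c$: the generic stratum $\dim(L \cap L^q) = 0$ is Zariski-open and dense in $\Gr_{n,l}$, so $\tau_{l,0}(x,y)$ is monic in $y$ of degree $l(n-l)$; since $g_{n-2l}^c(y)$ is monic of degree $c(n-2l-c)$, the summand $\tau_{l,0} \cdot g_{n-2l}^c$ supplies the unique top term $y^{(l+c)(n-l-c)}$ of $N_l^c(x,y)$, while the other summands $\tau_{l,d} \cdot g_{n-2l+d}^c$ contribute strictly lower $y$-degree. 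The main obstacle is the Frobenius-semilinear bookkeeping in the double count --- in particular the dimension identity $\dim(W \cap W^{q^{\nu-1}}) = \dim(W \cap W^q)$ and the equivalence $L \subset W^{q^{\nu-1}} \Leftrightarrow L^q \subset W$ for $\F_{q^\nu}$-rational subspaces --- which must be applied consistently to align the index ranges on the two sides of~\eqref{eq:taurec}.
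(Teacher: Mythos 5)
Your point count is essentially the paper's argument: the same stratification of $\Gr_{n,l}(\F_{q^\nu})$ by $d=\dim(L\cap L^q)$, the same fiber count $g_{n-2l+d}^c(q^\nu)$, and the same double count of pairs $(L,W)$ with $L+L^q\subset W$ and $\dim W=2l-d$ (your detour through $W^{q^{\nu-1}}$ is exactly the paper's $M^{1/q}$ for $\F_{q^\nu}$-rational $M$). One small point to patch: when $n-2l+d<0$ the factor $g_{n-2l+d,0}$ you isolate is $0$, not $1$, under the paper's conventions; in that degenerate case one must check separately that $T_{l,d}(\nu)$ and $\tau_{l,d}(q,q^\nu)$ both vanish (the paper does this explicitly), and this is needed because such terms are referenced by the recursion~\eqref{eq:taurec} at other indices.

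The genuine gap is in the monicity argument. Your claims that the stratum $d=0$ is dense and that the summand $\tau_{l,0}\cdot g_{n-2l}^c$ alone carries the top $y$-degree are only correct when $2l+c\le n$. If $2l>n$ the stratum $d=0$ is empty (one always has $\dim(L\cap L^q)\ge 2l-n>0$) and $\tau_{l,0}=0$; more generally, if $2l+c>n$ then $g_{n-2l}^c(y)=0$, so your designated leading summand vanishes identically. Such cases do occur under the standing hypothesis $l+c<n$, e.g.\ $n=5$, $l=c=2$. In that regime the top-degree term of $\Nlc(x,y)$ is supplied instead by $t_{2l-n+c}=\tau_{l,\,2l-n+c}(x,y)\cdot g_{c}^{c}(y)$, and your assertion that every summand other than $t_0$ has strictly smaller $y$-degree is false: one computes $\deg_y t_d=(l+c)(n-l-c)-d(d-2l+n-c)$, so the top degree is attained exactly at $d=0$ and $d=2l-n+c$. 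This is why the paper first establishes Proposition~\ref{prop:tau} --- that $\tau_{l,d}$ is nonzero, of $y$-degree $\delta(l,d)=(l-d)(n-l+d)$, and monic precisely when $\max(0,2l-n)\le d\le l\le n$, proved by its own descending induction --- and then runs a three-way case analysis on the sign of $2l-n+c$. You need that proposition (or an equivalent statement) both to identify which summand is the leading one and to bound the degrees of all the others; as written, your proof of the monicity half of the theorem does not go through.
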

Theorem~\ref{thm:Nnlc} provides us with an algorithm to calculate the Betti numbers of $X$
by~\eqref{eq:NBetti}.
From Proposition~\ref{prop:FFF} and 
the fact that $\Nlc (x, y)$ is monic with respect to $y$,
we obtain the following:
\begin{corollary}
The Frobenius incidence variety $X$ is geometrically  irreducible.
\end{corollary}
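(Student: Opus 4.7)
The plan is to reduce geometric irreducibility of $X$ to the computation of a single Betti number that is already implicit in Theorem~\ref{thm:Nnlc}. Proposition~\ref{prop:FFF} tells us that $X$ is smooth of pure dimension $d := (l+c)(n-l-c)$, and, being a closed subscheme of the projective variety $\Gr_{n,l}\times\Gr_n^c$, it is projective. Consequently $X\tensor\bar{\F}_p$ is a smooth projective variety whose connected components coincide with its irreducible components, each of dimension $d$. The number of those components is a cohomological invariant: by Poincar\'e duality, $H^{2d}(X\tensor\bar{\F}_p,\Q_\ell)$ is dual to $H^0$, so its rank $b_{2d}(X)$ equals the number of geometric irreducible components of $X$.

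Next I would read $b_{2d}(X)$ off the polynomial $N_l^c(rs,y)$. According to Theorem~\ref{thm:Nnlc}, this polynomial is monic of degree $d$ in $y$. Combining this with the identification~\eqref{eq:NBetti} expressing the even Betti numbers as the coefficients of $N_l^c(rs,y)$ forces $b_{2d}(X)=1$. Hence $X\tensor\bar{\F}_p$ has a unique irreducible component, proving that $X$ is geometrically irreducible.

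The main step to justify carefully is the formula~\eqref{eq:NBetti}, which is stated in the introduction but genuinely relies on the Weil conjectures. Starting from Theorem~\ref{thm:Fss}, one invokes Deligne's bound $|\alpha|=(rs)^{i/2}$ on the Frobenius eigenvalues acting on $H^i$; because $N_l^c(rs,(rs)^\nu)$ is a polynomial in $(rs)^\nu$, every eigenvalue must in fact be an integer power of $rs$. Deligne's bound then forces the odd cohomology to vanish and every eigenvalue on $H^{2i}$ to equal $(rs)^i$, so that the Grothendieck--Lefschetz trace formula yields~\eqref{eq:NBetti}. With this in hand, matching the leading coefficients is immediate and the corollary follows at once.
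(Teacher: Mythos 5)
Your proposal is correct and follows essentially the same route the paper intends: the paper derives the corollary precisely from Proposition~\ref{prop:FFF} (smoothness and purity of dimension) together with the fact that $\Nlc(x,y)$ is monic of degree $\dim X$ in $y$, the point being that the leading coefficient of the counting polynomial equals $b_{2\dim X}$, i.e.\ the number of geometric components. Your write-up merely makes explicit the standard Weil-conjecture bookkeeping (Deligne's bounds ruling out odd-degree contributions and forcing the eigenvalues on $H^{2i}$ to be $(rs)^i$) that the paper leaves implicit.
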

Thus the proof of Proposition~\ref{prop:smoothdim} and 
Theorem~\ref{thm:Fss} will be  completed by Theorem~\ref{thm:Nnlc}.
\par
\medskip
For the proof of Theorem~\ref{thm:Nnlc},
we let $q$ be a power of $p$ by a positive integer,
and  define  locally-closed reduced subvarieties
$\Tq_{ l,  d}$ of $\Gr_{n, l}$  over $\F_p$ by the property that 
\begin{equation}\label{eq:Tq}
\Tq_{ l,  d}(F)=\set{L\in {\Gr}_{n,l}(F)}{\dim (L\cap L^q )=d}
\end{equation}
should hold for any field $F$ of characteristic $p$.
First we prove the following:
\begin{proposition}\label{prop:Ttau}
For  any pair $(l, d)$ of non-negative integers $l$ and $d$, 
the number 
of $\F_{q^\nu}$-rational points of $\Tq_{ l,  d}$ is equal to 
$\tau_{ l,  d} (q, q^\nu)$.
\end{proposition}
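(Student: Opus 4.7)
The plan is to induct on the pair $(l,d)$ with respect to the order $\succ$, following the exact recursive structure used to define $\tau_{l,d}$ in \eqref{eq:tauini}--\eqref{eq:taurec}. The base cases required by \eqref{eq:tauini} are immediate: if $l>n$ or $d>l$ then $\Tq_{l,d}$ is empty; and if $d=l\le n$, then $L\in \Tq_{l,l}(\F_{q^\nu})$ forces $L=L^q$, so $L$ is $\F_q$-rational, giving $g_{n,l}(q)=\tau_{l,l}(q,q^\nu)$ points.

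For the inductive step I would double-count the set
$$S:=\{\,(L,W)\in \Gr_{n,l}(\F_{q^\nu})\times \Gr_{n,2l-d}(\F_{q^\nu}) \mid L\subset W \text{ and } L^q\subset W\,\}.$$
Fixing $L$ first and setting $t:=\dim(L\cap L^q)$, the conditions reduce to $L+L^q\subset W$ with $\dim W=2l-d$, which forces $t\ge d$ and leaves a $(t-d)$-dimensional $\F_{q^\nu}$-rational subspace of the $(n-2l+t)$-dimensional quotient $V/(L+L^q)$ as the free choice. This yields
$$|S|=\sum_{t=d}^{l}|\Tq_{l,t}(\F_{q^\nu})|\cdot g_{n-2l+t,\,t-d}(q^\nu).$$

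Fixing $W$ first and setting $u:=\dim(W\cap W^q)$ (so $W\in \Tq_{2l-d,u}$), the crucial observation is that the nonlinear condition $L^q\subset W$ is equivalent to the linear containment $L\subset W^{1/q}$, where $W^{1/q}:=W^{q^{\nu-1}}$ is the unique $\F_{q^\nu}$-rational subspace whose $q$th Frobenius image is $W$. Hence $L\subset W\cap W^{1/q}$. Applying $\Fr\spar{q^{\nu-1}}$ to the pair $(W,W^q)$ produces $(W^{1/q},W)$, and since Frobenius preserves the dimensions of intersections of $\F_{q^\nu}$-rational subspaces, $\dim(W\cap W^{1/q})=u$. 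The number of $\F_{q^\nu}$-rational $l$-dimensional subspaces of this $u$-dimensional space is $g_{u,l}(q^\nu)$ (which vanishes when $u<l$), giving
$$|S|=\sum_{u=l}^{2l-d}|\Tq_{2l-d,u}(\F_{q^\nu})|\cdot g_{u,l}(q^\nu).$$

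Equating the two expressions and solving for the $t=d$ contribution on the first side (its coefficient being $g_{n-2l+d,0}(q^\nu)=1$) reproduces exactly the recursion \eqref{eq:taurec} for $\tau_{l,d}(q,q^\nu)$, after each remaining $|\Tq_{l',d'}(\F_{q^\nu})|$ is replaced by $\tau_{l',d'}(q,q^\nu)$ via the inductive hypothesis. All such remaining indices satisfy $(l',d')\succ(l,d)$, since either $l'=2l-d>l$ or $l'=l$ with $d'>d$, so the induction closes. The main obstacle is the linear/nonlinear reformulation used in the second count together with the Frobenius-invariance $\dim(W\cap W^{1/q})=\dim(W\cap W^q)$; once these are recognized, the rest is an exact bookkeeping match against \eqref{eq:taurec}.
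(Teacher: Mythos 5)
Your proposal is correct and follows essentially the same route as the paper: the paper's proof double-counts exactly the same set $\PPP_{l,d}$ of pairs $(L,M)$ with $L+L^q\subset M$, obtains the same two expressions via the reformulation $L+L^q\subset M\Leftrightarrow L^q\subset M\cap M^q\Leftrightarrow L\subset M\cap M^{1/q}$, and solves for the $t=d$ term using the induction hypothesis. The only detail you gloss over is the degenerate case $n-2l+d<0$, where the coefficient $g_{n-2l+d,0}(q^\nu)$ equals $0$ rather than $1$ so one cannot simply solve for $|\Tq_{l,d}(\F_{q^\nu})|$; the paper handles this by noting separately that $\Tq_{l,d}(\F_{q^\nu})=\emptyset$ (since $\dim(L+L^q)>n$ is impossible) while the comparison of the two counts forces $\tau_{l,d}(q,q^\nu)=0$, so the claimed equality still holds there.
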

\begin{proof}
We proceed by descending induction on $(l, d)$ with respect to $\succ$.
By definition, we have $\Tq_{ l,  d} (\F_{q^\nu})=\emptyset$ for 
any $\nu\in \Z_{>0}$
if  $l>n$ or $d>l$.
Since $L=L^q$ is equivalent to the condition that $L$ be $\F_q$-rational, 
we have
$$
\Tq_{ l,  l} (\F_{q^\nu})=
\Gr_{n, l} (\F_{q})\;\; \textrm{for all\, $\nu\in \Z_{>0}$}.
$$
Thus  $|\Tq_{ l,  d} (\F_{q^\nu})|=\tau_{ l,  d} (q, q^\nu)$ 
holds for any $(l, d)$ with $l>n$ or $d\ge l$ by~\eqref{eq:tauini}.
Suppose that $d<l\le n$
and that 
$|\Tq_{ l\sprime,  d\sprime} (\F_{q^\nu})|=\tau_{ l\sprime, d\sprime}(q, q^\nu)$
holds for any $(l\sprime, d\sprime)$
with $(l\sprime, d\sprime)\succ (l, d)$.
We count 
the number of the elements of the finite set
$$
\PPP_{l,d}:=\set{(L, M)\in \Gr_{n, l} (\F_{q^\nu}) \times \Gr_{n, 2l-d} (\F_{q^\nu}) }%
{L+L^q \subset M}
$$
in two ways.
If  $(L, M)\in \PPP_{l,d}$,
then we have $d \le \dim (L\cap L^q )\le l$.
If $L\in \Tq_{ l, t} (\F_{q^\nu})$ with $d\le t\le l$,
then $\dim (L+ L^q )= 2l-t$ holds and the number of 
$M\in \Gr_{n, 2l-d} (\F_{q^\nu})$ containing  $L+L^q$
is equal to $g_{n-2l+t, t-d}(q^\nu)$.
Hence we have
\begin{equation}\label{eq:LHS}
|\PPP_{l,d}|=\sum_{t=d}^l |\Tq_{ l,  t} (\F_{q^\nu})| \cdot g_{n-2l+t, t-d}(q^\nu).
\end{equation}
On the other hand,
a pair $(L, M)\in \Gr_{n, l} (\F_{q^\nu}) \times \Gr_{n, 2l-d} (\F_{q^\nu})$ 
satisfies $L+L^q \subset M$
if and only if $L^q \subset M\cap M^q $, or equivalently,  
if and only if $L \subset M\cap M\sp{1/q}$.
Note that $M\sp{1/q}$ is also $\F_{q^\nu}$-rational.
If $L^q \subset M\cap M^q $ holds, then
we have $l\le \dim (M\cap M^q )\le 2l-d$.
If $\dim (M\cap M^q )=u$   
with $l\le u\le 2l-d$,
then the number of $L\in \Gr_{n, l} (\F_{q^\nu})$ contained in  $M\cap M\sp{1/q}$
is equal to $g_{u, l}(q^\nu)$.
Hence we have
\begin{equation}\label{eq:RHS}
|\PPP_{l,d}|=\sum_{u=l}^{2l-d} |\Tq_{2l-d,u}(\F_{q^\nu})| \cdot g_{u, l}(q^\nu).
\end{equation}
Comparing~\eqref{eq:LHS} and~\eqref{eq:RHS},
we obtain 
\newcommand{\tsum}{\textstyle\sum}
\newcommand{\dsum}{\displaystyle\sum}
\begin{eqnarray*}
&& |\Tq_{ l,  d} (\F_{q^\nu})|\cdot g_{n-2l+d, 0}(q^\nu)\mystruthd{0pt}{10pt}\\
 &=&
{\displaystyle\sum_{u=l}^{2l-d}} |\Tq_{ 2l-d, u} (\F_{q^\nu})| \cdot  g_{u, l}(q^\nu)-
\sum_{t=d+1}^l|\Tq_{ l,  t} (\F_{q^\nu})| \cdot g_{n-2l+t, t-d}(q^\nu)\\
&=& \tau_{ l,  d} (q, q^\nu)
\end{eqnarray*}
by the induction hypothesis and the defining equality~\eqref{eq:taurec}.
If $n-2l+d<0$, then 
$g_{n-2l+d, 0}(x)=0$ and  hence we have $\tau_{ l,  d} (q, q^\nu)=0$,
while we have
$\Tq_{ l,  d} (\F_{q^\nu})=\emptyset$ because $L\in \Tq_{ l,  d} (\F_{q^\nu})$
would imply $\dim (L+L^q)>n$.
If $n-2l+d\ge 0$, then 
we have $|\Tq_{ l,  d} (\F_{q^\nu})|=\tau_{ l,  d} (q, q^\nu)$
because $g_{n-2l+d, 0}(q^\nu)=1$.
Therefore $|\Tq_{ l,  d} (\F_{q^\nu})|=\tau_{ l,  d} (q, q^\nu)$
is proved for any  $(l, d)$.
\end{proof}
Next we put
$$
\delta(l, d):=(l-d)(n-l+d),
$$
and prove the following:
\begin{proposition}\label{prop:tau}
Consider the following condition:
$$
\cond (l, d): \;\; \max(0, 2l-n)\le d\le l\le n.
$$
If $\cond(l, d)$ is false, then $\tau_{ l,  d}(x, y)=0$.
If $\cond(l, d)$ is true, then
$\tau_{ l,  d}(x, y)$ is non-zero and of degree $\delta(l, d)$ with respect to $y$.
If $\cond(l, d)$ is true and $\delta(l, d)>0$,
then $\tau_{ l,  d}(x, y)$ is monic with respect to $y$.
\end{proposition}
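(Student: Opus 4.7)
The plan is to prove all three assertions simultaneously by descending induction on $(l,d)$ with respect to $\succ$. The base cases are: when $l>n$ or $d>l$, we have $\tau_{l,d}=0$ by definition~\eqref{eq:tauini} and $\cond(l,d)$ is false; when $l\le n$ and $d=l$, we have $\tau_{l,l}(x,y)=g_{n,l}(x)\ne0$ of $y$-degree $0=\delta(l,l)$ and $\cond(l,l)$ holds. For the inductive step, fix $(l,d)$ with $l\le n$ and $d<l$, and assume the result for all $(l',d')\succ(l,d)$. If $\cond(l,d)$ fails (equivalently $d<2l-n$), then in the first sum of~\eqref{eq:taurec} every pair $(2l-d,u)$ has $2l-d>n$, so $\tau_{2l-d,u}=0$ by the inductive hypothesis; in the second sum every factor $g_{n-2l+t,\,t-d}(y)$ vanishes because the inequality $t-d>n-2l+t$ is precisely $d<2l-n$. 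Both sums vanish, giving $\tau_{l,d}=0$.

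Now suppose $\cond(l,d)$ holds. Then $d<l$ combined with $d\ge 2l-n$ forces $l<n$, so $\delta(l,d)=(l-d)(n-l+d)>0$. I would show that in~\eqref{eq:taurec} the second sum has $y$-degree strictly less than $\delta(l,d)$, while the first sum contains a unique term of $y$-degree $\delta(l,d)$ with leading coefficient $1$. For the second sum, setting $s=t-d\in[1,l-d]$, a direct expansion yields
\[
\deg_y\bigl(\tau_{l,t}\cdot g_{n-2l+t,\,t-d}\bigr) \;=\; \delta(l,d)-s(d+s) \;\le\; \delta(l,d)-(d+1).
\]
For the first sum, setting $v=u-l$, the inductive hypothesis forces the $u$-th summand to vanish whenever $\cond(2l-d,u)$ fails, which occurs precisely for $v<3l-2d-n$; on the surviving range the $y$-degree of the summand equals $\delta(l,d)-v\bigl(v-(3l-2d-n)\bigr)$.

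A parabolic analysis then shows that this maximum is attained at a unique value of $v$: either $v=0$ (when $3l-2d-n\le 0$) or $v=3l-2d-n$ (when $3l-2d-n>0$). In the first situation the surviving term is $\tau_{2l-d,\,l}(x,y)$, which by induction is monic in $y$ of degree $\delta(2l-d,l)=\delta(l,d)$. In the second situation the surviving term factors as $\tau_{2l-d,\,4l-2d-n}(x,y)\cdot g_{4l-2d-n,\,l}(y)$, and both factors are monic in $y$ by induction, with degrees summing to $\delta(l,d)$ by the elementary identity
\[
(n-2l+d)(2l-d)+l(3l-2d-n)=(l-d)(n-l+d).
\]
In either situation $\tau_{l,d}$ is monic in $y$ of degree $\delta(l,d)$, completing the induction. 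The main obstacle will be the bookkeeping in this last case: one has to determine, summand by summand in the first sum, whether the inductive hypothesis forces vanishing, and identify the single summand that attains the top degree. The sign of $3l-2d-n$ is the organizing parameter, and once the case split is made the leading coefficient calculation reduces to the displayed identity.
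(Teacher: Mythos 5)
Your induction is sound and, in the step where $\cond(l,d)$ holds, follows the paper's argument almost verbatim: the same degree count $\delta(l,d)-t(t-d)$ for the second sum, the same quadratic $\delta(l,d)-(u-l)(u-4l+2d+n)$ for the first, and the same dichotomy on the sign of $3l-2d-n$ to isolate the unique top-degree, monic term. The one genuine divergence is the vanishing case: the paper deduces $\tau_{l,d}=0$ for false $\cond(l,d)$ from Proposition~\ref{prop:Ttau}, i.e.\ from the emptiness of $\Tq_{l,d}$ together with the remark that a polynomial vanishing at all $(q,q^\nu)$ is zero, whereas you read it off directly from the recursion~\eqref{eq:taurec}: when $2l-d>n$ every $\tau_{2l-d,u}$ vanishes by~\eqref{eq:tauini} and every $g_{n-2l+t,\,t-d}$ vanishes by the convention on $g$. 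Your version is purely algebraic and independent of the point count, which makes this part of the proposition self-contained. One small imprecision remains: in the case $3l-2d-n>0$ you assert that $\tau_{2l-d,\,4l-2d-n}$ is monic in $y$ ``by induction,'' but the inductive statement only guarantees monicity when the corresponding $\delta$ is positive, and $\delta(2l-d,4l-2d-n)=(n-2l+d)(2l-d)$ vanishes exactly when $d=2l-n$. In that degenerate subcase the factor is $\tau_{n,n}=g_{n,n}(x)=1$, so the conclusion still holds, but this needs to be said explicitly --- the paper treats it as a separate subcase.
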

\begin{proof}
First remark that,
if $a(x, y)\in \Z[x, y]$ satisfies $a(q, q^\nu)=0$ 
for any prime powers $q$ and any positive integers $\nu$,
then we have $a(x, y)=0$.
\par
If $\cond(l, d)$ is false, then
$\Tq_{ l,  d}$ is an empty variety   
for any $q$ by definition,
and hence $\tau_{ l,  d}(x, y)=0$ by Proposition~\ref{prop:Ttau}.
\par
We prove  the assertion
$$
\statement (l, d)
\quad
\left\{
\renewcommand{\arraystretch}{1.2}
\begin{array}{l}
\cond (l, d)\;\;\Rightarrow\;\; \tau_{ l,  d} (x, y)\ne 0 \;\;\textrm{and}\; \deg_y \tau_{ l,  d} (x, y)=\delta(l, d),\\
\cond (l, d) \sqand \delta(l,d)>0\;\;\Rightarrow\;\; \textrm{$\tau_{ l,  d}(x, y)$ 
is monic with respect to $y$},
\end{array}
\right.
$$
by descending induction on $(l, d)$ with respect to 
the order $\succ$.
If $\cond (l, d)$ is false, then  $\statement (l, d)$ holds vacuously.
Hence we can assume 
that $\statement (l\sprime, d\sprime)$ holds for any 
$(l\sprime, d\sprime)$ with $(l\sprime, d\sprime)\succ (l, d)$,
and  that $\cond (l, d)$ is true.
If $d=l$, then $\statement (l, d)$  holds 
because $\tau_{ l,  l}(x, y)=g_{n, l}(x)$ is a non-zero constant with respect to $y$.
We assume that $d<l$. 
Note that now we have $n-l+d\ge l>d$ and 
$\delta(l, d)>0$.
First we study the second summation of~\eqref{eq:taurec}.
For $t$ with $d+1\le t\le l$, we have  
$$
\delta(l, t)+\deg g_{n-2l+t, t-d}(y)=\delta(l, d)-t(t-d)<\delta(l, d).
$$
Hence, by the induction hypothesis,
every  term in the second summation  is non-zero of degree
$<\delta(l, d)$ with respect to $y$.
Next we study the first summation of~\eqref{eq:taurec}.
Note that the condition $\cond(2l-d, u)$ on $u$ is 
$$
\max(0, 4l-2d-n)\le u\le 2l-d.
$$
From the induction hypothesis,
the non-zero terms in  the first summation
are 
$$
s_u:=\tau_{ 2l-d, u} (x, y) \cdot g_{u, l}(y)
\;\;\;\textrm{with}\;\;\max(l, 4l-2d-n)\le u\le 2l-d.
$$
By the equality
$$
\delta (2l-d, u)+\deg  g_{u, l}(y)=\delta(l, d)-(u-l)(u-4l+2d+n),
$$
every non-zero term in  the first summation is of degree $\le \delta(l, d)$
with respect to $y$.
Moreover there exists one and only one term 
of degree equal to $ \delta(l, d)$, which is
\begin{equation}\label{eq:maxdegFS}
\begin{cases}
s_l=\tau_{ 2l-d, l} (x, y) \cdot g_{l, l}(y)&\textrm{if $l\ge 4l-2d-n$,} \\
s_{4l-2d-n}=\tau_{ 2l-d,4l-2d-n} (x, y) \cdot g_{4l-2d-n, l}(y) 
&\textrm{if $l<4l-2d-n$}. \\
\end{cases}
\end{equation}
It remains to show that this term is monic with respect to $y$.
In the case where $l\ge 4l-2d-n$, 
the term
$s_l$ 
is monic with respect to $y$, 
because $g_{l,l}(y)=1$ and 
$\delta(2l-d, l)=\delta(l, d)>0$.
We consider  the case  $l<4l-2d-n$.
Note that
$$
\delta_1:=\delta(2l-d,4l-2d-n)=(n-2l+d)(2l-d)\ge 0,
$$
and that $\delta_1=0$  holds if and only if $n-2l+d=0$,
because we have $2l-d>l\ge 0$.
Suppose that $\delta_1>0$.
Then 
$\tau_{ 2l-d,4l-2d-n} (x, y)$
is monic of degree $\delta_1$ with respect to $y$
by the induction hypothesis.
If $\delta_1=0$,
then we have
$$
\tau_{ 2l-d,4l-2d-n} (x, y)=\tau_{ n,n} (x, y)=g_{n,n}(x)=1.
$$
On the other hand,  $g_{4l-2d-n, l}(y)$ is monic of degree $>0$
for $l<4l-2d-n$.
Therefore  the term $s_{4l-2d-n}$ in the second case of~\eqref{eq:maxdegFS} is 
also monic with respect to $y$.
Thus the statement $\statement(l, d)$ holds.
\end{proof}
\begin{proof}[Proof of Theorem~\ref{thm:Nnlc}]
We show that
 $\Nlc(x, y)$ is monic of degree
$$
\tilde\delta(l,c):=(l+c)(n-l-c)
$$
with respect to $y$. 
We put
$$
t_d:=\tau_{ l,  d} (x, y)\cdot g_{n-2l+d}^c(y),
$$
so that $\Nlc(x, y)=\sum_{d=0}^l t_d$.
If $d<2l-n+c$, then $t_d=0$ because $g_{n-2l+d}^c(y)=0$.
Suppose that $d\ge 0$ satisfies 
$d\ge 2l-n+c$.
Then  we have
$$
\delta(l, d)+\deg g_{n-2l+d}^c (y)=\tilde\delta(l, c)-d(d-2l+n-c)\le \tilde\delta(l, c).
$$
Hence every non-zero term $t_d$ is of degree $\le \tilde\delta(l, c)$
with respect to $y$, and 
there are at most two terms 
that are of degree equal to $\tilde\delta(l, c)$;
namely, 
$$
t_0=\tau_{ l,  0}(x, y)\cdot g_{n-2l}^c (y)
\qquand 
t_{2l-n+c}=\tau_{ l,  2l-n+c}(x, y)\cdot g_{c}^c(y).
$$
If  $2l-n+c<0$,
then $t_{2l-n+c}$ does not appear in the summation $\sum_{d=0}^l t_d$,
and $t_{0}$ is non-zero and monic with respect to $y$
because $\cond(l, 0)$ holds,
$\delta(l, 0)>0$ and  $\deg g_{n-2l}^c (y)>0$.
If  $2l-n+c=0$,
then $t_{2l-n+c}=t_{0}$  is non-zero and monic  with respect to $y$
because $\cond(l, 0)$ holds, $\delta(l, 0)>0$ and $g_{n-2l}^c (y)=g_{c}^c (y)=1$.
If  $2l-n+c>0$,
then $t_{0}=0$  because $g_{n-2l}^c (y)=0$,
and the term $t_{2l-n+c}=\tau_{ l,  2l-n+c}(x, y)$,
which appears in the summation since $2l-n+c<l$, 
is non-zero and monic with respect to $y$
because $\cond(l, 2l-n+c)$ holds and $\delta(l, 2l-n+c)>0$.
Thus, in each case,
there exists one and only one  term 
that is non-zero of degree  $\tilde\delta(l, c)$,
and this term is monic with respect to $y$.
Hence the assertion is proved.
\par
\medskip
Finally we prove that the number of $\F_{(rs)^\nu}$-rational points of 
$X=X[r,s]^c_l$ is 
equal to $\Nlc(rs, (rs)^\nu)$.
For simplicity, we put $q:=rs$.
By the property~\eqref{eq:setX2} of $X$,  it is enough to show that
 the number of
the pairs 
$(L, M)\in \Gr_{n,l}(\F_{q^\nu})\times \Gr_{n}^c(\F_{q^\nu})$
satisfying $L+L\sp{q}\subset M\sp{s}$
is 
equal to $\Nlc(q, q^\nu)$.
Note that $\Gr_{n,l}(\F_{q^\nu})$ is the disjoint union of the finite sets 
$\Tq_{l, d}(\F_{q^\nu})$
over $d$ with $0\le d\le l$.
If $L\in \Gr_{n,l}(\F_{q^\nu})$ is contained in $\Tq_{l, d}(\F_{q^\nu})$,
then
$L+L\sp{q}$ is of dimension $2l-d$ and hence the number of 
$M\sprime\in \Gr_{n}^c(\F_{q^\nu})$
containing $L+L\sp{q}$
is $g_{n-2l+d}^c(q^\nu)$.
Because $M\mapsto M\sp{s}$ is a bijection from $\Gr_{n}^c(\F_{q^\nu})$ to itself,
the number of the pairs  is equal to
the sum of $\tau_{ l, d}(q, q^\nu)\cdot g_{n-2l+d}^c(q^\nu)$ 
over $d$ with $0\le d\le l$ by Proposition~\ref{prop:tau}. 
Thus we have $|X(\F_{q^\nu})|=\Nlc(q, q^\nu)$
by the definition~\eqref{eq:NX}. 
\end{proof}
The following 
is useful in the computation of $\Nlc(x, y)$:
\begin{corollary}\label{cor:dual}
We have
$\tau_{l,d}(x, y)=\tau_{ n-l, n-2l+d}(x, y)$
for any $(l, d)$.
\end{corollary}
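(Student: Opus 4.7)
The plan is to prove this corollary by a geometric duality argument rather than by a direct manipulation of the recurrence~\eqref{eq:taurec}. The central observation is that a choice of non-degenerate $\F_p$-rational bilinear form on $V$ induces, for every field $F$ of characteristic $p$, a natural bijection
\[
\perp \; : \; \Gr_{n, l}(F) \;\isom\; \Gr_{n, n-l}(F), \qquad L \;\longmapsto\; L\sperp,
\]
and this bijection intertwines with the $q$th power Frobenius, i.e.\ $(L\sperp)\sp{q} = (L\sp{q})\sperp$, since the bilinear form is $\F_p$-rational.

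First I would combine this duality with the elementary identity $(A+B)\sperp = A\sperp \cap B\sperp$ applied to $A=L$, $B=L\sp{q}$ to deduce that
\[
\dim(L\sperp \cap (L\sperp)\sp{q}) \;=\; \dim(L+L\sp{q})\sperp \;=\; n - (2l-d),
\]
whenever $L \in \Tq_{l,d}(F)$. Hence $\perp$ restricts to a bijection $\Tq_{l,d}(F) \isom \Tq_{n-l,\, n-2l+d}(F)$. Taking $F = \F_{q^\nu}$ and counting yields $|\Tq_{l,d}(\F_{q^\nu})| = |\Tq_{n-l,\, n-2l+d}(\F_{q^\nu})|$ for every prime power $q$ and every $\nu \in \Z_{>0}$.

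Next I would invoke Proposition~\ref{prop:Ttau}, which identifies these cardinalities with the values $\tau_{l,d}(q, q^\nu)$ and $\tau_{n-l,\, n-2l+d}(q, q^\nu)$, respectively. Thus the polynomial
\[
a(x,y) \;:=\; \tau_{l,d}(x,y) - \tau_{n-l,\, n-2l+d}(x,y) \;\in\; \Z[x,y]
\]
vanishes at $(q, q^\nu)$ for every prime power $q$ and every $\nu \in \Z_{>0}$, which forces $a(x,y) = 0$, as already noted in the first paragraph of the proof of Proposition~\ref{prop:tau}. This gives the claimed identity.

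The only nontrivial point is the compatibility of duality with Frobenius; this is essentially a formal check, using that the structure constants of an $\F_p$-rational bilinear form are fixed by $\Fr\sparq$, so that $(L\sperp)\sp{q} = (L\sp{q})\sperp$ holds on the level of $F$-valued points. Once this is in hand, the rest is routine counting plus the polynomial-identity principle, so there is no serious obstacle; the boundary cases $l > n$, $d > l$, or $n-2l+d < 0$ are covered automatically since both sides vanish by the conventions in~\eqref{eq:tauini} and by Proposition~\ref{prop:tau}.
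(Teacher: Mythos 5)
Your proof is correct and follows essentially the same route as the paper: choose an $\F_p$-rational non-degenerate form, use $(L\sperp)^q=(L^q)\sperp$ and $(L+L^q)\sperp=L\sperp\cap(L\sperp)^q$ to get a bijection $\Tq_{l,d}(\F_{q^\nu})\isom \Tq_{n-l,\,n-2l+d}(\F_{q^\nu})$, and conclude via Proposition~\ref{prop:Ttau} and the polynomial-identity remark. No issues.
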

\begin{proof}
We choose an inner product
$V\times V\to \F_p$
defined over $\F_p$,
and denote by $L\sperp\subset V_F$ the orthogonal complement of a linear subspace 
$L\subset V_F$.
Then $L\mapsto L\sperp$ induces an isomorphism $\Gr_{V, l}\isom \Gr_{V, n-l}$.
Since $(L^q)\sperp=(L\sperp)^q$
and $L\sperp\cap (L\sperp)^q=(L+L^q)\sperp$,
this isomorphism induces a bijection from  
$\Tq_{ l, d}(\F_{q^\nu})$ to $ \Tq_{ n-l, n-2l+d}(\F_{q^\nu})$.
Thus the  equality  follows from
Proposition~\ref{prop:Ttau} and the remark at the beginning of the proof of Proposition~\ref{prop:tau}.
\end{proof}
\begin{example}\label{example:Nn11third}
We have
$$
N_{1}^1(x, y)=g_{n, 1}(y) \cdot g_{n-2}^1(y) +g_{n, 1}(x) \cdot (g_{n-1}^1(y) -g_{n-2}^1(y) ), 
$$
and hence the Betti numbers of $X[r, s]_{1}^1$ in Example~\ref{example:Nn11second} are obtained.
\end{example}
\begin{example}\label{example:Nn22}
We have
\begin{eqnarray*}
\tau_{ 2, 1}(x, y) &=&  \tau_{ n-2, n-3}(x, y)\\
&=& g_{n, n-1}(y)-g_{n, n-1}(x) + g_{n, n-1}(x)\cdot  g_{n-1, n-2}(y) - 
g_{n, n-2}(x)\cdot  g_{2, 1}(y),
\end{eqnarray*}
and hence
\newcommand{\gx}[2]{g_{#1}^{#2} (x)}
\newcommand{\gy}[2]{g_{#1}^{#2} (y)}
\begin{eqnarray*}
N_{2}^2(x, y)&=&\phantom{\,+\,}
\gy{n}{ 2}\cdot \gy{n-4}{ 2}\,+\, \gy{n}{ 1}\cdot \gy{n-3}{ 2}\,+\, \gx{n}{ 1}\cdot \gy{n-1}{ 1}\cdot \gy{n-3}{ 2}\\
&& 
 \,-\,\gy{n}{ 1}\cdot \gy{n-4}{ 2}\,-\,\gx{n}{ 1}\cdot \gy{n-1}{ 1}\cdot \gy{n-4}{ 2}\,+\, \gx{n}{ 2} \cdot\gy{n-2}{ 2}\\
&& 
\,-\,\gx{n}{ 2}\cdot \gy{2}{ 1}\cdot \gy{n-3}{ 2}
\,-\,\gx{n}{ 1}\cdot \gy{n-3}{ 2}\,+\, \gx{n}{ 2}\cdot \gy{2}{ 1}\cdot \gy{n-4}{ 2}\\
&& 
\,-\,\gx{n}{ 2}\cdot \gy{n-4}{ 2}\,+\, \gx{n}{ 1}\cdot \gy{n-4}{ 2}. 
\end{eqnarray*}
For instance, consider the case where $n=7$.
Then the  Betti numbers   of  
the $12$-dimensional Frobenius incidence variety 
$X[r,s]_{2}^2$ 
are  as follows, where $q:=rs$: 
$$
\begin{array}{ll}
b_{{0}}=b_{{24}}:&1,\\
b_{{2}}=b_{{22}}:&2,\\
b_{{4}}=b_{{20}}:&5,\\
b_{{6}}=b_{{18}}:&{q}^{6}+{q}^{5}+{q}^{4}+{q}^{3}+{q}^{2}+q+8,\\
b_{{8}}=b_{{16}}:&2\,({q}^{6}+{q}^{5}+{q}^{4}+{q}^{3}+{q}^{2}+q)\,+12,\\
b_{{10}}=b_{{14}}:&3\,({q}^{6}+{q}^{5}+{q}^{4}+{q}^{3}+{q}^{2}+q)\,+14,\\
b_{{12}}:\phantom{=b_{{12}}}&{q}^{10}+{q}^{9}+2\,{q}^{8}+2\,{q}^{7}+6\,{q}^{6}+6\,{q}^{5}+6\,{q}^{4}+5\,{q}^{3}+5\,{q}^{2}+4\,q+16.
\end{array}
$$
\end{example}
\begin{remark}
The fact that $\Nlc(x,y)$ should be palindromic with respect to $y$ helps us in checking the computation
of $\Nlc(x,y)$.
\end{remark}
As a simple corollary of Propositions~\ref{prop:Ttau}~and~\ref{prop:tau},
we obtain the following.
Let $\kappa$ denote the function field of $X$.
By the generic point of $X$, we mean 
 the pair $(L_\eta, M_\eta)$ of $\kappa$-rational linear subspaces 
corresponding to
$$
\Spec\kappa\;\;\to\;\; X\;\;\inj\;\; \Gr_{n,l}\times\Gr_n^c,
$$
where $\Spec\kappa\to X$ is  the canonical morphism.
\begin{proposition}\label{prop:generic}
Let $(L_\eta, M_\eta)$ be the generic point of $X$.
\par
{\rm (1)}
If $2l+c\ge n$, then $L_\eta+ L_\eta\sp{rs} =M_\eta\sp{s}$.
If $2l+c\le n$, then the projection $X\to \Gr_{n,l}$ is surjective and hence $L_\eta+ L_\eta\sp{rs}$
is of dimension $2l$.
\par
{\rm (2)}
If $l+2c\ge n$, then $M_\eta\cap M_\eta\sp{rs} =L_\eta\sp{r}$.
If $l+2c\le n$, then the projection $X\to \Gr_n^c$ is surjective and hence $M_\eta\cap M_\eta\sp{rs} $
is of dimension $n-2c$.
\end{proposition}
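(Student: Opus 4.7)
The plan is to analyze $X$ via the stratification induced by $d := \dim(L \cap L\sp{rs})$, exploiting the locally closed subvarieties $\Tq_{l,d}\subset\Gr_{n,l}$ (with $q = rs$) introduced in Proposition~\ref{prop:Ttau}. Let $\pi_1\colon X\to \Gr_{n,l}$ and $\pi_2\colon X\to \Gr_n^c$ denote the two projections, and set $Y_d:=\pi_1\inv(\Tq_{l,d})\subset X$. For $L\in \Tq_{l,d}$, the fiber consists of those $M\in \Gr_n^c$ with $M\sp{s}\supset L+L\sp{rs}$; since $M\mapsto M\sp{s}$ is a purely inseparable, universally bijective self-morphism of $\Gr_n^c$, this fiber has the same dimension as the set of codimension-$c$ subspaces containing the fixed $(2l-d)$-dimensional space $L+L\sp{rs}$, which is isomorphic to $\Gr_{n-(2l-d)}^c$ of dimension $c(n-2l+d-c)$ (and empty when $n-2l+d<c$).

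Combined with $\dim\Tq_{l,d}=\delta(l,d)=(l-d)(n-l+d)$, provided by Propositions~\ref{prop:Ttau} and~\ref{prop:tau}, a short expansion yields
\[
\dim Y_d \;=\; (l-d)(n-l+d)+c(n-2l+d-c)\;=\;\dim X \,-\, d\bigl(d+(n-2l-c)\bigr).
\]
Since $X$ is irreducible of dimension $(l+c)(n-l-c)$ by Proposition~\ref{prop:smoothdim}, its generic point lies in the unique stratum $Y_{d\sp{*}}$ of full dimension, for which the formula forces $d\sp{*}=\max(0,\,2l+c-n)$. This settles part~(1): when $2l+c\le n$, we have $d\sp{*}=0$, hence $L_\eta\cap L_\eta\sp{rs}=0$ and $\dim(L_\eta+L_\eta\sp{rs})=2l$; moreover $\pi_1(Y_0)$ coincides with the open dense subset $\Tq_{l,0}\subset\Gr_{n,l}$, so $\pi_1(X)$ contains a dense subset of $\Gr_{n,l}$ and is closed by properness of $\pi_1$ (as $X$ is projective), giving surjectivity. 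When $2l+c\ge n$, we have $d\sp{*}=2l+c-n$, so $\dim(L_\eta+L_\eta\sp{rs})=2l-d\sp{*}=n-c=\dim M_\eta\sp{s}$, and the inclusion $L_\eta+L_\eta\sp{rs}\subset M_\eta\sp{s}$ recorded in~\eqref{eq:setX2} is forced to be an equality.

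Part~(2) is then obtained from the duality $(L,M)\mapsto (M\sperp,L\sperp)$ already exploited in the proof of Corollary~\ref{cor:dual}: this identifies $X[r,s]_l^c$ with $X[s,r]_c^l$, sends the conditions $L\sp{r}\subset M\cap M\sp{rs}$ and $L+L\sp{rs}\subset M\sp{s}$ into each other (with $l,c$ and $r,s$ swapped), and exchanges the numerical inequalities $2l+c\gtreqless n$ and $l+2c\gtreqless n$. Thus the two assertions of part~(2) become exactly those of part~(1) for the dual incidence variety. The main technical step throughout is the dimension identity for $Y_d$; once the generic stratum has been located, irreducibility of $X$, properness of the projections, and the containment already encoded in~\eqref{eq:setX2} deliver everything else with no further obstacle.
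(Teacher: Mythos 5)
Your proposal is correct, and it rests on the same combinatorial foundation as the paper---the stratification of $\Gr_{n,l}$ by the loci $\Tq_{l,d}$ and the degrees $\delta(l,d)$ of their counting polynomials $\tau_{l,d}$---but it organizes the argument differently. The paper never computes the dimensions of the strata of $X$: for $2l+c\ge n$ it observes that $(L,M)\mapsto\dim(L+L\sp{rs})$ is lower semi-continuous and bounded above by $n-c$ on $X$, and then uses only the \emph{non-emptiness} of $\Tq_{l,2l+c-n}(\F_{(rs)^\nu})$ for large $\nu$ (read off from the non-vanishing of the polynomial $\tau_{l,2l+c-n}$) to conclude that the maximum $n-c$ is attained, hence attained at the generic point; for $2l+c\le n$ it instead exhibits an explicit point $(L_\gamma, N_\gamma\sp{1/s})$ of $X$ lying over the generic point of $\Gr_{n,l}$. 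Your route converts the point counts into the dimension equalities $\dim \Tq_{l,d}=\delta(l,d)$ and locates the generic point in the unique full-dimensional fibered stratum $Y_{d^*}$. This is a heavier inference (a Lang--Weil-type step that the paper never needs in this strength), though strictly you only require the easy inequality $\dim \Tq_{l,d^*}\ge\delta(l,d^*)$: a locally closed subset of dimension $\dim X$ in the irreducible $X$ is automatically dense, hence open, hence the stratum containing the generic point, so the upper bounds on the other $\dim Y_d$ are not actually needed. In exchange your computation treats both cases of part~(1) uniformly and makes $d^*=\max(0,2l+c-n)$ drop out of a single identity. The reduction of part~(2) to part~(1) by orthogonal complementation is exactly the paper's ``dual way.''
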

\begin{proof}
We put $q:=rs$ again.
The function $d_L: (L,M)\mapsto \dim (L+L\sp{q})$ is lower semi-continuous 
and bounded by  $n-c$ from above on $X$.
If  $2l+c\ge n$,
then $\cond(l, 2l-n+c)$ is true and $\delta(l, 2l-n+c)>0$.
Therefore the set
$\Tq_{l, 2l-n+c} (\F_{q^\nu})$
is  non-empty for a sufficiently large $\nu$.
Hence $d_L$ attains $n-c$ on $X$,
and thus $\dim (L_\eta+L_\eta\sp{q}) =n-c$.
Therefore we have $L_\eta+L_\eta\sp{q}=M_\eta\sp{s}$.
Let $\kappa_\gamma$ denote the function field of $\Gr_{n,l}$.
If $2l+c\le n$, 
then the generic point $L_\gamma\in \Gr_{n,l}(\kappa_\gamma)$
satisfies  
$\dim (L_\gamma+ L_\gamma\sp{q})=2l\le n-c$.
There exists a $\kappa_\gamma$-valued point 
$N_\gamma\in \Gr_{n}^c(\kappa_\gamma)$
such that 
$L_\gamma+ L_\gamma\sp{q}\subset N_\gamma$,
and hence
$(L_\gamma, N_\gamma\sp{1/s})$ is a $\kappa_\gamma\sp{1/s}$-valued point of
 $X$.
Thus the assertion (1) is proved.
\par
The assertion (2) is proved in the dual way.
\end{proof}
\section{Unirationality}\label{sec:unirational}
In this section, we prove Theorem~\ref{thm:unirational}.
\par
\medskip
Note that the purely inseparable morphisms
$$
\Fr\spar{s}\times\id\;:\; \Gr_{V, l}\times \Gr_V^c\to \Gr_{V, l}\times \Gr_V^c
\quand 
\id \times \Fr\spar{r} \;:\; \Gr_{V, l}\times \Gr_V^c\to \Gr_{V, l}\times \Gr_V^c
$$
induce purely inseparable surjective morphisms
$$
X[rs,1]\to X[r,s]\quand X[1, rs]\to X[r,s]
$$
defined over $\F_p$.
Hence it is enough to prove  that 
$X[q,1]$ is purely-inseparably unirational over $\F_p$
for any power $q$ of $p$.
We prove this fact
by induction on $2l+c$.
\par
Suppose that $2l+c\le n$.
We show that $X[q,1]$ is rational over $\F_p$,
and hence, \emph{a fortiori},
purely-inseparably unirational over $\F_p$.
For the generic point $(L_\eta, M_\eta)$ of $X[q,1]$, we have
$\dim (L_\eta+L_\eta^q)=2l$ by Proposition~\ref{prop:generic}(1).
We fix an $\F_p$-rational linear subspace $K\subset V$ of dimension $2l$.
Then there exist a non-empty open subset $\UUU$ of $\Gr_{V, l}$ and a morphism 
$$
\alpha : \UUU\to \GL(V)
$$
defined over $\F_p$ such that,
for any field $F$ of characteristic $p$ and any $F$-valued point $L\in \UUU(F)$, we have
$\dim (L+L^q)=2l$ and $\alpha (L)\in \GL(V_F)$ induces an isomorphism
from $K\tensor F$ to $L+L^q$.
Let $M/K\mapsto M$ denote the natural embedding $\Gr_{V/K}^c\inj \Gr_{V}^c$.
Then the morphism 
$$
\UUU\times \Gr_{V/K}^c\;\;\to\;\; \Gr_{V, l}\times\Gr_V^c
$$
given by $(L, M/K)\mapsto (L, \alpha(L)(M))$
is a birational map  defined over $\F_p$ from the rational variety $\UUU\times \Gr_{V/K}^c$
to  $X[q,1]$,
with the  inverse rational map being given by
$$
(L, M) \mapsto (L, \alpha(L)\inv(M)/K).
$$
\par
Suppose that $2l+c> n$.
We put
$$
l\sprime:=2l+c-n
\quand
c\sprime:=n-l.
$$
Then we have $l\sprime>0$, $c\sprime>0$
and $l\sprime+c\sprime<n$.
We show that 
 $X:=X[q, 1]^c_l$ is birational over $\F_p$ to
$X\sprime:=X[1, q]^{c\sprime}_{l\sprime}$.
We denote by $\kappa$ and $\kappa\sprime$ 
the function fields of $X$ and  $X\sprime$, respectively.
Note that 
\begin{eqnarray*}
X(F) &=&\set{(L,M)\in\Gr_{n,l}(F)\times \Gr_n^c (F)}{L+L^q \subset M},\\
X\sprime(F) &=&\set{(L\sprime,M\sprime)\in\Gr_{n,2l+c-n}(F)\times \Gr_{n,l} (F)}%
{L\sprime\subset M\sprime\cap M\sp{\prime q}}.
\end{eqnarray*}
By Proposition~\ref{prop:generic}(1),
the generic point $(L_\eta, M_\eta)$ of $X$ satisfies 
$L_\eta+L_\eta^q=M_\eta$,
and hence $\dim (L_\eta\cap L_\eta^q)=2l+c-n$.
Therefore we have
$(L_\eta\cap L_\eta^q, L_\eta)\in X\sprime(\kappa)$,
and hence 
$$ 
(L, M)\mapsto (L\cap L^q, L)
$$
defines  a rational map $\rho : X\ratmap X\sprime$
defined over $\F_p$.
On the other hand, we have
$$
l\sprime+2  c\sprime=n+c>n.
$$
By Proposition~\ref{prop:generic}(2),
the generic point $(L\sprime_\eta, M\sprime_\eta)$ of $X\sprime$ satisfies 
$M\sprime_\eta\cap M\sp{\prime q}_\eta=L\sprime_\eta$,
and hence 
$$
\dim (M\sprime_\eta+  M\sp{\prime q}_\eta)=2(n-c\sprime)-l\sprime=n-c.
$$
Therefore we have
$(M\sprime_\eta, M\sprime_\eta+  M\sp{\prime q}_\eta)\in X(\kappa\sprime)$,
and hence
$$
(L\sprime, M\sprime)\mapsto (M\sprime, M\sprime+M\sp{\prime q})
$$
defines a rational map $\rho\sprime: X\sprime\ratmap X$
defined over $\F_p$.
Note that
$\rho\sprime(\rho (L_\eta, M_\eta))$ is defined and equal to $(L_\eta, M_\eta)$.
Note also that 
$\rho(\rho\sprime (L\sprime_\eta, M\sprime_\eta))$ is defined and equal to 
$(L\sprime_\eta, M\sprime_\eta)$.
Hence $X$ and $X\sprime$ are birational over $\F_p$.
Since
$$
2l\sprime +c\sprime=2l+c-(n-l-c)<2l+c,
$$
 the induction hypothesis implies that $X\sprime$ is purely-inseparably unirational
 over $\F_p$.
 Therefore $X$ is also purely-inseparably unirational over $\F_p$.
 \qed
\begin{remark}\label{rem:unirational}
We have established the facts that
 $X[q, 1]^c_l$ is rational over $\F_p$ for $2l+c\le n$, and that
 $X[1, q]^c_l$ is rational over $\F_p$ for $l+2c\le n$.
\end{remark}
\section{Intersection pairing}\label{sec:intnumb}
In this section,
we calculate the intersections of the subvarieties $\Sigma_\Lambda$ 
defined by~\eqref{eq:defSigma} in the Chow ring  of  $X=X[r,s]^c_l$.
\par
\medskip
For a smooth projective variety $Y$ of dimension  $m$,
we denote by $A^k(Y)=A_{m-k} (Y)$ the Chow group of rational equivalence classes
of algebraic cycles on $Y$ with codimension $k$
defined over an algebraic closure of the base field,
and by $A(Y)=\bigoplus A^k(Y)$ the Chow ring of $Y$.

\par
\medskip
In order to state our main result, 
we need to define a homomorphism $\tlchi$.
Let $W$ be a $w$-dimensional linear space,
and let
$$
S_{W, l}\to \Gr_{W, l}=\Gr_{w, l}
$$
denote  the universal subbundle of $W\tensor \OOO$ over $\Gr_{W, l}$.
Let $x_1, \dots, x_l$ be the formal Chern roots of the total Chern class
$c(S_{W, l}\dual)$ of the dual vector bundle $S_{W, l}\dual$:
$$
c(S_{W, l}\dual)=(1+x_1)\cdot \cdots \cdot(1+x_l).
$$
Then we have a natural homomorphism
\begin{equation*}\label{eq:gammaW}
\symchi_{w,l}\;:\; \Z[[x_1, \dots, x_l]]^{\SSSS_l} \to A(\Gr_{W, l})
\end{equation*}
from the ring of symmetric power series in variables $x_1, \dots, x_l$
with $\Z$-coefficients 
to the Chow ring $A(\Gr_{W, l})$ of $\Gr_{W, l}$.
Let $U$ be a $u$-dimensional linear space,
and let
$$
Q_{U}^c\to \Gr_{U}^c=\Gr_{u}^c
$$
denote  the universal quotient bundle  of $U\tensor \OOO$ over $\Gr_{U}^c$.
Let $y_1, \dots, y_c$ be the formal Chern roots of the total Chern class
$c(Q_{U}^c)$:
$$
c(Q_{U}^c)=(1+y_1)\cdot\cdots \cdot(1+y_c).
$$
Then we have a natural homomorphism
\begin{equation*}\label{eq:gammaU}
\symchi_{u}^c\;:\;
\Z[[y_1, \dots, y_c]]^{\SSSS_c} \to A(\Gr_{U}^c).
\end{equation*}
Composing  $\symchi_{w,l}\tensor \symchi_u^c$  and
the natural homomorphism 
$$
A(\Gr_{W, l})\tensor A(\Gr_{U}^c)\to A(\Gr_{w, l}\times \Gr_{u}^c),
$$
we obtain a homomorphism
$$
\tlchi\;\;:\;\; \Z[[x_1, \dots, x_l, y_1, \dots, y_c]]^{\SSSS_l\times \SSSS_c} \to 
A(\Gr_{w, l}\times \Gr_{u}^c)
$$
from the ring of ${\SSSS_l\times \SSSS_c}$-symmetric power series to $A(\Gr_{w, l}\times \Gr_{u}^c)$.
\par
\medskip
Let $\Lambda$ and $\Lambda\sprime$ be $\F_{rs}$-rational linear subspaces
of $V_{\F}:=V\tensor \F_{rs}$.
We consider the intersection of the subvarieties $\Sigma_{\Lambda}$ and $\Sigma_{\Lambda\sprime}$
of $X$ in $A(X)$.
We put 
$$
\intdim:=\dim (\Lambda\cap\Lambda\sprime)
\quand
\sumcodim:=n-\dim (\Lambda+\Lambda\sprime).
$$
Then we have
$$
e:=\dim\Sigma_{\Lambda}+\dim\Sigma_{\Lambda\sprime}-\dim X=(l-c)(c-l+m-k),
$$
and the intersection 
of $\Sigma_{\Lambda}$ and $\Sigma_{\Lambda\sprime}$ 
in $A(X)$ is an element of $A_e(X)$.
We put
$$
\Upsilon:=\Lambda\cap \Lambda\sprime
\quand \Theta:=V_{\F}/ (\Lambda+\Lambda\sprime)\sp{r}.
$$
Since
$\Sigma_{\Lambda}=\Gr_{\Lambda, l}\times \Gr_{V_{\F}/\Lambda\sp{r}}^c$
and
$\Sigma_{\Lambda\sprime}=\Gr_{\Lambda\sprime, l}\times \Gr_{V_{\F}/\Lambda\sp{\prime r}}^c$,
the scheme-theoretic intersection of 
$\Sigma_{\Lambda}$ and $\Sigma_{\Lambda\sprime}$  
is the smooth subscheme
$$
\Gamma:=\Gr_{\Upsilon, l}\times \Gr_{\Theta}^c \;\;\cong\;\; \Gr_{\intdim, l}\times \Gr_{\sumcodim}^c.
$$
Then the intersection of $\Sigma_{\Lambda}$ and $\Sigma_{\Lambda\sprime}$ in
$A_e(X)$ is localized  
in $A_e (\Gamma)=A^d (\Gamma)$, where 
$$
d:=\dim\Gamma-e=\sumcodim  l+\intdim c-2lc.
$$
The following is the main result of this section:
\begin{theorem}\label{thm:intnumb}
Let $\Lambda$ and $\Lambda\sprime$ be as above.
Then the intersection  of $\Sigma_{\Lambda}$ and $\Sigma_{\Lambda\sprime}$ in $A(X)$
is equal to the image of the codimension $d$ part of 
$$
\tlchi (f)\in A(\Gamma)
$$
by the push-forward homomorphism $A_e(\Gamma)\to A_e(X)$, 
where $f$ is the ${\SSSS_l\times \SSSS_c}$-symmetric power series
$$
\frac{\prod_{i=1}^l(1+x_i)^{\sumcodim}\cdot \prod_{j=1}^c(1+y_j)^\intdim}{
\prod_{i=1}^l \prod_{j=1}^c (1+rx_i+y_j) (1+x_i+s y_j)
},
$$
and $\tlchi$ is 
the homomorphism to $A(\Gamma)=A(\Gr_{\intdim, l}\times \Gr_{\sumcodim}^c)$ defined above.
\end{theorem}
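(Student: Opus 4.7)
The strategy is to apply Fulton's excess intersection formula~\cite{MR1644323} to the smooth subvarieties $\Sigma_\Lambda$ and $\Sigma_{\Lambda\sprime}$ inside the smooth variety $X$. First I would verify that their scheme-theoretic intersection inside the ambient $G:=\GGlc$ (and hence inside $X$) is exactly $\Gamma=\Gr_{\Upsilon, l}\times \Gr_\Theta^c$: on the factor $\Gr_{n,l}$, the tangent-space computation $\Hom(L,\Lambda/L)\cap \Hom(L,\Lambda\sprime/L)=\Hom(L,\Upsilon/L)$ shows that $\Gr_{\Lambda,l}$ and $\Gr_{\Lambda\sprime,l}$ meet transversely along $\Gr_{\Upsilon,l}$, and the dual calculation on $\Gr_n^c$ uses $\Lambda\sp{r}+\Lambda\sp{\prime r}=(\Lambda+\Lambda\sprime)\sp{r}$. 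Since $\Gamma$ is smooth, the formula yields
\begin{equation*}
[\Sigma_\Lambda]\cdot_X[\Sigma_{\Lambda\sprime}]=j_*\bigl(c_d(E)\cap[\Gamma]\bigr),
\end{equation*}
where $j\colon\Gamma\hookrightarrow X$ is the inclusion and $E$ is the rank-$d$ excess normal bundle fitting into
$0\to N_{\Gamma/X}\to N_{\Sigma_\Lambda/X}|_\Gamma\oplus N_{\Sigma_{\Lambda\sprime}/X}|_\Gamma\to E\to 0$.

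By Proposition~\ref{prop:FFF}, $X$ is cut out transversely in $G$ by $\tilde\gamma\in H^0(G,\FFF)$, so $N_{X/G}\cong\FFF|_X$ and $[N_{Z/X}]=[N_{Z/G}]-[\FFF|_Z]$ in $K$-theory for any smooth $Z\subset X$. Substituting into the defining sequence of $E$ gives
\begin{equation*}
c(E)=\frac{c(N_{\Sigma_\Lambda/G}|_\Gamma)\cdot c(N_{\Sigma_{\Lambda\sprime}/G}|_\Gamma)}{c(N_{\Gamma/G})\cdot c(\FFF|_\Gamma)}.
\end{equation*}
The three numerator factors are read off from the product Grassmannian structure: for any linear subspace $\Pi\subset V_\F$, we have $N_{\Gr_{\Pi, l}/\Gr_{V, l}}\cong S\dual\otimes (V_\F/\Pi)$ and $N_{\Gr_{V_\F/\Pi}^c/\Gr_V^c}\cong \Pi\dual\otimes Q$, where $S$ and $Q$ are the tautological sub- and quotient bundles. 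Writing $x_1,\dots,x_l$ and $y_1,\dots,y_c$ for the Chern roots of $S\dual$ on $\Gr_{\Upsilon,l}$ and of $Q$ on $\Gr_\Theta^c$, and using that $V_\F/\Pi$ and $\Pi\dual$ are trivial of the indicated ranks, the exponents contributed by $\Lambda,\Lambda\sprime,\Upsilon,(\Lambda+\Lambda\sprime)\sp{r}$ telescope to produce the numerator
$\prod_{i=1}^l(1+x_i)^{\sumcodim}\cdot\prod_{j=1}^c(1+y_j)^{\intdim}$ of $f$.

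For the denominator, the crucial observation is that the Frobenius pullback $(\Fr\spar{r})^*$ acts on $A^i(\Gr_{n,l})$ by multiplication by $r^i$, so $(\Fr\spar{r})^*(S\dual)$ has Chern roots $rx_1,\dots,rx_l$. By the tensor-product rule for Chern roots, $(\Fr\spar{r}\times\id)^*\EEE$ and $(\id\times\Fr\spar{s})^*\EEE$ then have Chern roots $\{rx_i+y_j\}$ and $\{x_i+sy_j\}$ respectively, so $c(\FFF|_\Gamma)=\prod_{i,j}(1+rx_i+y_j)(1+x_i+sy_j)$. Combining, $c(E)=\tlchi(f)$ in the completed Chow ring of $\Gamma$; extracting its codimension-$d$ component and applying $j_*$ yields the theorem. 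The main delicacies will be verifying the scheme-theoretic equality $\Sigma_\Lambda\cap\Sigma_{\Lambda\sprime}=\Gamma$ (so that the smooth excess formula applies directly, without Segre classes) and justifying the Frobenius Chern-class calculation, a phenomenon specific to positive characteristic.
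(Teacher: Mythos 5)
Your proposal follows essentially the same route as the paper: the excess intersection formula applied to the clean intersection $\Sigma_\Lambda\cap\Sigma_{\Lambda\sprime}=\Gamma$, with the excess bundle's Chern class computed from the (co)normal bundle exponents on each Grassmannian factor and from the fact that Frobenius pullback scales Chern roots by $r$ (resp.\ $s$); the paper phrases the same computation with tangent bundles and the relation $c(\XXX)=c(\TTT(G)|_\Gamma)c(\TTT(\Gamma))/\bigl(c(\FFF|_\Gamma)c(\TTT(\Sigma_\Lambda)|_\Gamma)c(\TTT(\Sigma_{\Lambda\sprime})|_\Gamma)\bigr)$, which is identical to your normal-bundle formula. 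One small caveat: the factors $\Gr_{\Lambda,l}$ and $\Gr_{\Lambda\sprime,l}$ meet \emph{cleanly}, not transversely, along $\Gr_{\Upsilon,l}$ (their intersection has excess dimension $l\sumcodim$ in general), but the tangent-space identity you invoke is exactly the clean-intersection condition the excess formula requires, so the argument is unaffected.
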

\begin{proof}
We denote by $\TTT(Y)\to Y$ the tangent bundle of a smooth variety $Y$.
Note that the tangent bundle of a Grassmannian variety
is the tensor product of the dual of the universal subbundle and the universal quotient bundle.
\par
Let $W$ and $W\sprime$ be linear subspaces of $V_{\F}$
with dimension $w$ and $w\sprime$, respectively, 
such that
$W\subset W\sprime$,
and let $i:\Gr_{W, l}\inj \Gr_{W\sprime, l}$
denote the natural embedding.
For the universal subbundles 
$S_{W, l}\to \Gr_{W, l}$ and $S_{W\sprime, l}\to \Gr_{W\sprime, l}$,
we have
\begin{equation}\label{eq:iS}
i^*S_{W\sprime, l}\dual =S_{W, l}\dual.
\end{equation}
We denote by
$Q_W^{w-l}\to \Gr_{W, l}$ and $Q_{W\sprime}^{w\sprime-l}\to \Gr_{W\sprime, l}$
the universal quotient bundles.
Then we have an exact sequence
$$
0\;\to\; Q_W^{w-l}\;\to\; i^* Q_{W\sprime}^{w\sprime-l} \;\to\; W\sprime/W\tensor \OOO \;\to\; 0
$$
of vector bundles over $\Gr_{W, l}$.
Therefore we have the following  equality in  $A(\Gr_{W, l})$:
\begin{equation}\label{eq:cTanGl}
i^* c(\TTT ( \Gr_{W\sprime, l}))=c(\TTT ( \Gr_{W, l}))\cdot c(S_{W, l}\dual)^{w\sprime-w}.
\end{equation}
Let $V_{\F}\to U=V_{\F}/K$ and $V_{\F}\to U\sprime=V_{\F}/K\sprime$ 
be linear quotient spaces of $V_{\F}$
 such that 
$K\sprime\subset K$.
We put $u:=\dim U$ and  $u\sprime:=\dim U\sprime$.
Let $j:\Gr_{U}^c\inj \Gr_{U\sprime}^c$
denote the natural embedding.
For the universal quotient bundles 
$Q_U^c\to \Gr_U^c$ and $Q_{U\sprime}^c\to \Gr_{U\sprime}^c$,
we have
\begin{equation}\label{eq:jQ}
j^*Q_{U\sprime}^c =Q_U^c.
\end{equation}
\erase{
We denote by
$S_{U, u-c}\to \Gr_U^c$ and by $S_{U\sprime, u\sprime-c}\to \Gr_{U\sprime}^c$
the universal subbundles.
Then we have an exact sequence
$$
0\;\to\;S_{U, u-c}\dual  \;\to\; j^* S_{U\sprime, u\sprime-c}\dual \;\to\; 
(K/K\sprime)\dual\tensor\OOO  \;\to\; 0
$$
of vector bundles over $\Gr_U^c$.
Therefore we have the following  equality in $A(\Gr_U^c)$:
}
By the argument dual to the above, 
we obtain the following  equality in $A(\Gr_U^c)$:
\begin{equation}\label{eq:cTanGc}
j^* c(\TTT ( \Gr_{U\sprime}^c))=c(\TTT ( \Gr_U^c))\cdot c(Q_U^c)^{u\sprime-u}.
\end{equation}
\par
\medskip
We consider the vector bundle
$$
\XXX:=\frac{\TTT(X)|_\Gamma }{\TTT(\Sigma_{\Lambda})|_\Gamma + \TTT(\Sigma_{\Lambda\sprime})|_\Gamma}
$$
of rank $d$ over $\Gamma$.
By the excess intersection formula~\cite[p.~102]{MR1644323},
the intersection of $\Sigma_{\Lambda}$ and $\Sigma_{\Lambda\sprime}$
in $A(X)$ is equal to the image of 
the top Chern class $c_{d}(\XXX)\in A^{d} (\Gamma)=A_e(\Gamma)$
of $\XXX$
 by the push-forward homomorphism $A_e(\Gamma)\to A_e(X)$. 
Hence it is enough to show that 
the total Chern class $c(\XXX)\in A(\Gamma)$ of $\XXX$
is equal to $\tlchi (f)$.
From the exact  sequence
$$
0
\;\to\;
\TTT(\Gamma)
\;\to\;
\TTT(\Sigma_{\Lambda})|_\Gamma \oplus \TTT(\Sigma_{\Lambda\sprime})|_\Gamma 
\;\to\;
\TTT(\Sigma_{\Lambda})|_\Gamma + \TTT(\Sigma_{\Lambda\sprime})|_\Gamma 
\;\to\;
 0
$$
and Proposition~\ref{prop:FFF}, we have
$$
c(\XXX)=
\frac{c(\TTT(\Gr_{V, l}\times \Gr_{V}^c)|_\Gamma)\cdot c(\TTT(\Gamma)) }
{c(\FFF|_\Gamma)\cdot c(\TTT(\Sigma_{\Lambda})|_\Gamma)\cdot c(\TTT(\Sigma_{\Lambda\sprime})|_\Gamma) }.
$$
We put
$$
\lambda:=\dim\Lambda\quand \lambda\sprime:=\dim\Lambda\sprime.
$$
By~\eqref{eq:iS}-\eqref{eq:cTanGc}, 
we have the following equalities in 
$A(\Gamma)$:
\begin{eqnarray*}
c(\TTT(\Gr_{V, l}\times \Gr_{V}^c)|_\Gamma)&=& c(\TTT(\Gamma)) \cdot 
\left(
c(S_{\Upsilon, l}\dual)^{n-\intdim}\,\tensor c(Q_{\Theta}^c)^{n-\sumcodim}
\right),\\
c(\TTT(\Sigma_{\Lambda})|_\Gamma)&=& c(\TTT(\Gamma)) \cdot 
\left(
c(S_{\Upsilon, l}\dual)^{\lambda-\intdim}\,\tensor c(Q_{\Theta}^c)^{n-\sumcodim-\lambda}
\right),\\
c(\TTT(\Sigma_{\Lambda\sprime})|_\Gamma)&=& c(\TTT(\Gamma)) \cdot 
\left(
c(S_{\Upsilon, l}\dual)^{\lambda\sprime-\intdim}\tensor c(Q_{\Theta}^c)^{n-\sumcodim-\lambda\sprime}
\right).
\end{eqnarray*}
Here $c(S_{\Upsilon, l}\dual)^{\mu}\tensor c(Q_{\Theta}^c)^{\nu} \in A(\Gr_{\Upsilon, l})\tensor A( \Gr_{\Theta}^c)$
is identified with  its image in $A(\Gamma)=A(\Gr_{\Upsilon, l}\times \Gr_{\Theta}^c)$.
Since $\lambda+\lambda\sprime=\intdim+n-\sumcodim$,
we have
$$
c(\XXX)=c(\FFF|_\Gamma)\inv \cdot 
\left( 
c(S_{\Upsilon, l}\dual)^{\sumcodim} 
\tensor
c(Q_{\Theta}^c)^{\intdim}
\right).
$$
By the definition~\eqref{eq:defFFF} of $\FFF$ and~\eqref{eq:iS} and~\eqref{eq:jQ}, we have 
$$
\FFF|_\Gamma
\;=\;
({ \Fr^{(r)\,*}}S_{\Upsilon, l}\dual\tensor Q_{\Theta}^c)
\,\oplus\,
(S_{\Upsilon, l}\dual\tensor {\Fr^{(s)\,*}}Q_{\Theta}^c),
$$
where $\pr^*$ is omitted.
Note that 
$$
c({ \Fr^{(r)\,*}} S_{\Upsilon, l}\dual)=\prod_{i=1}^l  (1+r x_i)\;\;\textrm{in}\;\; A(\Gr_{\Upsilon, l})
\quand 
c({\Fr^{(s)\,*}} Q_{\Theta}^c)=\prod_{j=1}^c (1+s y_j)\;\;\textrm{in}\;\; A(\Gr_{\Theta}^c).
$$
Hence  $c(\FFF|_\Gamma)\in A(\Gamma)$ is the image of
$$
\prod_{i=1}^l \prod_{j=1}^c (1+rx_i+y_j) (1+x_i+s y_j)\in 
\Z[[x_1, \dots, x_l, y_1, \dots, y_c]]^{\SSSS_l\times \SSSS_c}
$$
by the homomorphism $\tlchi $.
Therefore we have $c(\XXX)=\tlchi (f)$ in $A(\Gamma)$.
\end{proof}
When  $l=c$, 
the intersection number of $\Sigma_{\Lambda}$ and $\Sigma_{\Lambda\sprime}$ in $X[r,s]^l_l$
is defined  and equal to the degree of the $A_0$-component of $\tlchi(f)\in A(\Gamma)$.
When $l=c=1$,
we have $S_{\Upsilon, 1}\dual=\OOO(1)$ on $\Gr_{\Upsilon, 1}\cong \P^{\intdim-1}$
and $Q_{\Theta}^1=\OOO(1)$ on $\Gr_{\Theta}^1\cong \P^{\sumcodim-1}$.
Therefore we obtain the following:
\begin{corollary}\label{cor:intnumb2}
Suppose that $l=c=1$, and 
let $\Lambda$ and $\Lambda\sprime$ be  as above.
Then the intersection number of  $\Sigma_{\Lambda}$ and $\Sigma_{\Lambda\sprime}$ in 
$X[r,s]_{1}^1$
is equal to the coefficient of $x^{\intdim-1}y^{\sumcodim-1}$ in the power series
$$
(1+rx+y)\inv (1+x+s y)\inv (1+x)^{\sumcodim} (1+y)^{\intdim}
\;\; \in \;\;\Z[[x, y]].
$$
\end{corollary}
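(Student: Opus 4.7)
\emph{Proof proposal.} This will be a direct specialization of Theorem~\ref{thm:intnumb} with $l=c=1$; the work is in identifying every object and extracting the top class. The plan is in three short steps, and there is no serious obstacle, only bookkeeping.

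First, I would check that the intersection number is even defined. With $l=c=1$ the excess quantity becomes $e=(l-c)(c-l+m-k)=0$, so $\dim\Sigma_\Lambda+\dim\Sigma_{\Lambda\sprime}=\dim X[r,s]^1_1$ and their intersection is a class in $A_0(X[r,s]^1_1)$, whose degree is the intersection number. The incidence locus degenerates to $\Gamma=\Gr_{\Upsilon,1}\times\Gr_\Theta^1\cong\P^{\intdim-1}\times\P^{\sumcodim-1}$, and the dual universal subbundle $S_{\Upsilon,1}\dual$ and universal quotient bundle $Q_\Theta^1$ are both just $\OOO(1)$ on the two factors; the homomorphism $\tlchi$ then sends the formal Chern roots $x_1$ and $y_1$ (which I rename $x$ and $y$) to the respective hyperplane classes.

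Second, I would match codimensions. The quantity $d=\sumcodim\cdot l+\intdim\cdot c-2lc$ specializes to $d=\intdim+\sumcodim-2$, which equals $\dim\Gamma$. So the codimension-$d$ part of $\tlchi(f)$ is precisely the top-dimensional part in $A^{\intdim+\sumcodim-2}(\Gamma)$. Because the Chow ring of $\P^{\intdim-1}\times\P^{\sumcodim-1}$ is $\Z[x,y]/(x^{\intdim},y^{\sumcodim})$, among the monomials $x^a y^b$ with $a+b=\intdim+\sumcodim-2$ only $x^{\intdim-1}y^{\sumcodim-1}$ survives; it is the class of a point. Hence the top-dimensional part of $\tlchi(f)$ equals $c\cdot x^{\intdim-1}y^{\sumcodim-1}$, where $c$ is the coefficient of $x^{\intdim-1}y^{\sumcodim-1}$ in the power series $f\in\Z[[x,y]]$.

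Third, I would transfer this to $X[r,s]^1_1$. The push-forward $A_0(\Gamma)\to A_0(X[r,s]^1_1)$ along the closed immersion $\Gamma\inj X[r,s]^1_1$ preserves degrees of $0$-cycles, so the intersection number of $\Sigma_\Lambda$ and $\Sigma_{\Lambda\sprime}$ equals $c$. Finally, substituting $l=c=1$ into the formula for $f$ in Theorem~\ref{thm:intnumb} yields
$$
f=\frac{(1+x)^{\sumcodim}(1+y)^{\intdim}}{(1+rx+y)(1+x+sy)},
$$
which is exactly the power series in the statement. The only place one could go astray is in the codimension count and the choice of which monomial to extract; once $d=\dim\Gamma$ is noted, the rest is automatic.
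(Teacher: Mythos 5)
Your proposal is correct and follows essentially the same route as the paper: the paper likewise obtains the corollary by specializing Theorem~\ref{thm:intnumb} to $l=c=1$, noting that $S_{\Upsilon,1}\dual$ and $Q_\Theta^1$ become $\OOO(1)$ on $\P^{\intdim-1}$ and $\P^{\sumcodim-1}$, and reading off the degree of the $A_0$-component of $\tlchi(f)$ as the coefficient of $x^{\intdim-1}y^{\sumcodim-1}$. Your explicit checks that $e=0$, that $d=\dim\Gamma$, and that only the monomial $x^{\intdim-1}y^{\sumcodim-1}$ survives in $\Z[x,y]/(x^{\intdim},y^{\sumcodim})$ are exactly the bookkeeping the paper leaves implicit.
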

\section{The lattice $\latN(X)$}\label{sec:latticeN}
In this section,
we treat the case $l=c=1$.
We put $X:=X[r,s]^1_1$ throughout this section.
\par
\medskip
We denote by $\disc \latN(X)$ the discriminant of $\latN(X)$.
We put
$$
f(n):=\frac{(rs)^n-1}{rs-1}=|\P^{n-1} (\F_{rs})|.
$$
\begin{theorem}\label{thm:NNN}
The  rank of the lattice $\latN(X)$ associated with  
$X=X[r,s]^1_1$ is  equal to
the middle Betti number $b_{2(n-2)}(X)$ of $X$.
If $n>3$, 
 $\disc \latN(X)$ is a divisor of 
$\min(r, s)^{(n-2)(f(n)-1)}$,
while 
if $n=3$, 
$\disc \latN(X)$ is a divisor of $ \min(r, s)^{f(3)+1}$.
\end{theorem}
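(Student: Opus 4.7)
The plan is to prove both parts by a direct Gram-matrix computation on an explicit sublattice of $\latNtl(X)$, combined with the upper bound coming from the $l$-adic cycle class map.

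The rank upper bound is immediate: the cycle class map injects $\latN(X)\otimes\Q_l$ into $H^{2(n-2)}(X\otimes\bar\F_{rs},\Q_l)$, whose dimension over $\Q_l$ equals $b_{2(n-2)}(X)$. For the matching lower bound, I would consider the sublattice $L\subset\latNtl(X)$ generated by $h_1,\ldots,h_{n-1}$ together with the $f(n)$ cycles $\Sigma_P$ attached to the $\F_{rs}$-rational points $P$ of $\P_*(V)$. Proposition~\ref{prop:FFF} applied with $l=c=1$ identifies $\FFF=\OOO(r,1)\oplus\OOO(1,s)$, so $[X]=(rH_1+H_2)(H_1+sH_2)$ in $A(\P_*(V)\times\P^*(V))$. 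A push-pull computation gives $h_i\cdot h_j=s$ if $i+j=n-1$, $rs+1$ if $i+j=n$, $r$ if $i+j=n+1$, and $0$ otherwise; after a row-reversal, the resulting matrix $A$ is the tridiagonal Toeplitz matrix with diagonal $rs+1$, super-diagonal $r$, and sub-diagonal $s$, whose characteristic recurrence $x^2-(rs+1)x+rs$ has roots $1$ and $rs$, yielding $|\det A|=f(n)$. From Corollary~\ref{cor:intnumb2} one reads $\Sigma_P\cdot\Sigma_{P'}=0$ for $P\ne P'$ (the corresponding $\Gamma$ is empty) and $\Sigma_P\cdot\Sigma_P=(-s)^{n-2}$, while $h_i\cdot\Sigma_P=\delta_{i,n-1}$ since the pullback of the first hyperplane class vanishes on $\Sigma_P$. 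The Gram matrix of $L$ thus takes the block form
\[
G=\begin{pmatrix} A & B \\ B^{T} & (-s)^{n-2}\,I_{f(n)}\end{pmatrix},
\]
with $B$ supported on its last row (all entries equal to $1$). A short kernel analysis shows $\ker G$ is one-dimensional, spanned by a primitive integral vector $v$ whose $\Sigma_P$-components are all $\pm 1$; combined with the upper bound, this forces $\rank\latN(X)=b_{2(n-2)}(X)$.

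For the discriminant, remove the row and column of one chosen $\Sigma_P$ to obtain a non-degenerate submatrix $G'$ of size $b_{2(n-2)}(X)$. The Schur-complement identity combined with the rank-one structure $B'^{T}A^{-1}B'=(A^{-1})_{n-1,n-1}\,J$ yields
\[
|\det G'|=|\det A|\cdot\frac{s^{(n-2)(f(n)-1)}}{f(n)}=s^{(n-2)(f(n)-1)},
\]
using the relation $(A^{-1})_{n-1,n-1}=(-s)^{n-2}/f(n)$ forced by the one-dimensional kernel. Dividing by $v_k^2=1$ gives $|\disc(L/L^{\perp})|=s^{(n-2)(f(n)-1)}$, and since $\disc\latN(X)$ divides $\disc(L/L^{\perp})$ (as the image of $L$ in $\latN(X)$ has finite index), we conclude that $\disc\latN(X)$ divides $s^{(n-2)(f(n)-1)}$. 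Running the same argument with $\Sigma_H$ for $H\in\P^*(V)(\F_{rs})$ in place of $\Sigma_P$ swaps $r\leftrightarrow s$ and produces the companion divisibility by $r^{(n-2)(f(n)-1)}$; since $r$ and $s$ are both powers of the prime $p$, the gcd of the two bounds is $\min(r,s)^{(n-2)(f(n)-1)}$, proving the theorem for $n>3$. The case $n=3$ then follows a fortiori because $\min(r,s)^{f(3)-1}$ divides $\min(r,s)^{f(3)+1}$.

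The principal obstacle is the Schur-complement bookkeeping: one must check that the rational denominators $f(n)$ and $s^{-(n-2)}$ cancel exactly against $|\det A|=f(n)$ and the primitivity of $v$, so that the final bound is an integer power of $\min(r,s)$. The explicit null vector is obtained by solving $A\cdot(h_1,\ldots,h_{n-1})^{T}=(0,\ldots,0,\pm f(n))^{T}$, setting $\sigma_j=-(-s)^{-(n-2)}h_{n-1}$, and normalising $h_{n-1}=(-s)^{n-2}$; the identity $\det A=\pm f(n)$ absorbs the $f(n)$ in the target to make every $h_i$ an integer, while the normalisation makes every $\sigma_j=\mp 1$, ensuring $v$ is primitive and delivering the clean final bound.
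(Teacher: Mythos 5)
Your proposal is correct and follows essentially the same route as the paper: both establish the rank upper bound from the middle $l$-adic cohomology and then compute the block Gram determinant of the sublattice spanned by $[h_1],\dots,[h_{n-1}]$ and the point-cycles $[\Sigma_{\ell(P)}]$ with one point omitted, arriving at $\pm\det A_{\HHH}\cdot(-s)^{(n-2)(f(n)-1)}/f(n)=\pm s^{(n-2)(f(n)-1)}$ exactly as in your Schur-complement step (the paper packages this computation as Lemma~\ref{lem:detA} applied with the parameter $t_0=-(f(n)-1)/(-s)^{n-2}$). The only minor differences are that the paper obtains $\min(r,s)$ by invoking duality to assume $s\le r$ rather than running the dual computation with hyperplane cycles and taking a gcd, and that it treats $n=3$ with the separate diagonal sublattice $\ang{[h_1]}+\MMM$ (since Lemma~\ref{lem:detA} requires $m\ge 3$), whereas your uniform argument, which does go through for $n=3$ by direct computation, yields the slightly stronger bound $s^{f(3)-1}$ there, which a fortiori gives the stated divisor of $\min(r,s)^{f(3)+1}$.
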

For the proof of Theorem~\ref{thm:NNN},  we fix notation.
We write $[\Sigma_\Lambda]\in \latN(X)$ and  $[h_i]\in \latN(X)$
for 
the rational equivalence classes of the algebraic cycles  $\Sigma_\Lambda$ and $h_i$ modulo $\latNtl(X)\sperp$.
\par
\medskip
Let $\Pset{0}$ denote the set of $\F_{rs}$-rational points of $\P_*(V)$,
whose cardinality is $f(n)$.
For a positive integer $k<n$, 
let $\Lset{k}$ denote the set of $k$-dimensional $\F_{rs}$-rational linear subspaces  of 
$V_{\F}:=V\tensor\F_{rs}$.
For $\Lambda\in \Lset{k}$,
we denote by  $\P_*(\Lambda)$  
the corresponding $(k-1)$-dimensional 
projective linear subspace of $\P_*(V)$ over $\F_{rs}$,
and put
$$
S(\Lambda):=\set{P\in \Pset{0}}{ P\in \P_*(\Lambda)}.
$$
For $P\in \Pset{0}$,
let $\ell(P)\in \Lset{1}$ denote  the corresponding 
 $\F_{rs}$-rational linear subspace of dimension $1$.
\par
\medskip
We calculate the intersection numbers 
of the classes  $[h_i]$ and $[\Sigma_{\Lambda}]$ in $\latN(X)$.
By Corollary~\ref{cor:intnumb2}, for $P\in \Pset{0}$ and $\Lambda\in \Lset{k}$, we have
\begin{equation}\label{eq:SigmaSigma}
([\Sigma_{\Lambda}], [\Sigma_{\ell(P)}])=\begin{cases}
(-s)^{n-k -1} &\textrm{if $P\in S(\Lambda)$,} \\
0 & \textrm{otherwise.}
\end{cases}
\end{equation}
For $\Lambda\in \Lset{k}$, 
the subvariety   $\Sigma_{\Lambda}$ is a Cartesian product of 
$\P_*(\Lambda)\subset \P_*(V)$ and $\P^*(V/\Lambda\sp{r})
\subset \P^*(V)$
with  $\dim \P_*(\Lambda)=k-1$ and
 $\dim \P^*(V/\Lambda\sp{r})=n-1 -k$.
Hence we have
\begin{equation}\label{eq:hSigma}
([h_i],  [\Sigma_{\Lambda}])=
\begin{cases}
1 & \textrm{if $i+k=n$}, \\
0 & \textrm{otherwise}. \\
\end{cases}
\end{equation}
Recall from Proposition~\ref{prop:FFF} that 
$X\subset \P_*(V)\times \P^*(V)$ is
a subvariety of codimension $2$ 
defined as  the zero locus of the section $\tilde\gamma$ of 
the vector bundle $\OOO(r, 1)\oplus \OOO(1, s)$ of rank $2$.
Hence the intersection numbers of 
the classes $[h_i]$  are 
\begin{equation}\label{eq:hh}
([h_i], [ h_j])=
\begin{cases}
s & \textrm{if $i+j=n-1$}, \\
1+rs & \textrm{if $i+j=n$}, \\
r & \textrm{if $i+j=n+1$}, \\
0 & \textrm{otherwise}. \\
\end{cases}
\end{equation}
We fix a point $\baseptP \in \Pset{0}$, and consider the following four submodules 
of $\latN(X)$:
\begin{eqnarray*}
\HHH &:= & \gen{\;[h_1], \dots, [h_{n-1}]\;}, \\
\MMM &:= & \genset{ [\Sigma_{\ell(P)}] }{ P\in \Pset{0} }, \\
\MMM_0 &:= & \genset{ [\Sigma_{\ell(P)}] }{ P\in \Pset{0}, P\ne \baseptP  }, \\
\MMM_D &:= & \genset{ [D_{\ell(P)}]}{ P\in \Pset{0}, P\ne \baseptP  }, \quad
\textrm{where $[D_{\ell(P)}]:= [\Sigma_{\ell(P)}]-[\Sigma_{\ell(\baseptP )}]$.} 
\end{eqnarray*}
Here $\gen{v_1, \dots, v_N}$ denotes the submodule generated by $v_1, \dots, v_N$. 
\par
\medskip
The following is elementary:
\begin{lemma}\label{lem:detA}
Let $m$ be an integer $\ge 3$,
and let $u, v, t$ be indeterminants.
Consider  the  $m\times m$ matrix
$A(m, u, v, t)=(a_{ij})_{1\le i, j\le m}$   defined by
$$
a_{ij}:=\begin{cases}
u  &\textrm{if $i+j=m$},\\
1+uv  &\textrm{if $i+j=m+1$},\\
v  &\textrm{if $i+j=m+2$},\\
t  &\textrm{if $i=j=m$},\\
0 &\textrm{otherwise}.\\
\end{cases}
$$
Then we have
$$
\det A(m, u, v, t)=(-1)^{[m/2]}\left(\frac{(uv)^{m+1}-1}{uv-1}+(-u)^{m-1} t\right),
$$
where
$[m/2]$ denotes the integer part of $m/2$.
\qed
\end{lemma}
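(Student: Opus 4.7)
The plan is to use multilinearity of the determinant in the last column to split off the contribution of $t$, and then to reduce each resulting piece to a standard computation (a tridiagonal determinant and a triangular determinant, respectively) by reversing the order of columns or rows.

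First, since $A(m,u,v,t)$ differs from $A(m,u,v,0)$ only by the addition of $t$ to the $(m,m)$ entry, multilinearity of the determinant in the last column gives
$$
\det A(m,u,v,t) \;=\; \det A(m,u,v,0)\;+\;t\cdot\Delta_m,
$$
where $\Delta_m$ is the $(m,m)$-minor of $A(m,u,v,0)$, that is, the determinant of the $(m-1)\times(m-1)$ matrix obtained by deleting the last row and column. It therefore suffices to evaluate these two quantities separately.

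For $\det A(m,u,v,0)$, I would reverse the order of columns. The reversal permutation $j\mapsto m+1-j$ has sign $(-1)^{[m/2]}$ and turns $A(m,u,v,0)$ into a tridiagonal matrix $T_m$ with $1+uv$ on the diagonal, $u$ on the superdiagonal, and $v$ on the subdiagonal. Expanding $\det T_m$ along the last row gives the standard three-term recurrence
$$
\det T_m \;=\; (1+uv)\det T_{m-1}\;-\;uv\det T_{m-2},
$$
with initial conditions $\det T_1=1+uv$ and $\det T_2=1+uv+(uv)^2$. An easy induction confirms that $\det T_m=((uv)^{m+1}-1)/(uv-1)$, and hence $\det A(m,u,v,0)=(-1)^{[m/2]}((uv)^{m+1}-1)/(uv-1)$.

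For $\Delta_m$, the $(m-1)\times(m-1)$ submatrix has non-zero entries only at the positions $(i,j)$ with $i+j\in\{m,m+1,m+2\}$. Reversing its rows (a permutation of sign $(-1)^{[(m-1)/2]}$) produces an upper triangular matrix with $u$ on the diagonal, $1+uv$ on the first superdiagonal, and $v$ on the second superdiagonal; its determinant is therefore $u^{m-1}$. Hence $\Delta_m=(-1)^{[(m-1)/2]}u^{m-1}$. The elementary parity identity $[m/2]+[(m-1)/2]=m-1$ (checked directly for $m$ even and $m$ odd) converts this into $(-1)^{[m/2]}(-u)^{m-1}$, and summing the two contributions yields the claimed formula. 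The computation is entirely elementary linear algebra; the only point that requires care is the bookkeeping of signs arising from the column and row reversals, which is handled by the parity identity above.
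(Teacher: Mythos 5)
Your proof is correct and complete: the split via multilinearity in the last column, the identification of the column-reversed matrix as tridiagonal with determinant $\sum_{i=0}^{m}(uv)^i$, the triangularization of the $(m,m)$-minor, and the sign bookkeeping via $[m/2]+[(m-1)/2]=m-1$ all check out (I verified the formula independently for $m=3$). The paper states this lemma as elementary and omits its proof entirely, so there is no argument to compare against; your write-up is a valid way to fill that gap.
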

\begin{proof}[Proof of Theorem~\ref{thm:NNN}]
By the duality,
we can assume that
$s\le r$.
\par
If the cohomology class of $x\in \latNtl  (X)$ is zero,
then $x$ is obviously contained 
$\latNtl  (X)\sperp$.
Hence, by~Example~\ref{example:Nn11second},  the rank of $\latN(X)$ is at most
$$
b:=b_{2(n-2)}(X)=n+f(n)-2=(n-1)+(|\Pset{0}|-1).
$$
\par
\medskip
First  assume that $n>3$. 
We
show that 
$\latN(X)$ is of rank $b$,
and that its  discriminant divides  $s^{(n-2)(f(n)-1)}$.
Consider the submodule $\HHH+\MMM_0$ of $\latN(X)$ generated  by the $b$ classes 
\begin{equation}\label{eq:blist}
[h_1], \dots, [h_{n-1}], \;\; [\Sigma_{\ell(P)}]\;\; (P\in \Pset{0},\, P\ne \baseptP ).
\end{equation}
By~\eqref{eq:SigmaSigma}, \eqref{eq:hSigma} and \eqref{eq:hh},
the intersection matrix of these classes is
$$
\wt{A}:=
\left[
\begin{array}{c|c}
A_{\HHH} &\begin{array}{ccc}   &\\ &O &  \\ 1 &\cdots & 1 \end{array}\\
\hline
\begin{array}{ccc} &&1 \\ &O\phantom{a}&\vdots\mystruthd{20pt}{15pt}    \\ &&1 \end{array} 
& (-s)^{n-2} I 
\end{array}
\right],
$$
where $A_{\HHH}:=A(n-1, s, r, 0)$ is the intersection matrix of the classes $[h_i]$ and 
$I$ is the identity matrix of size ${f(n)-1}$.
By Lemma~\ref{lem:detA}, we have
\begin{eqnarray*}
\det \wt{A}&=&
\det A(n-1, s, r, t_0)\cdot \det\, ((-s)^{n-2} I)\mystruthd{0pt}{7pt}
\\
&=&(-1)^{[(n-1)/2]} \cdot (-s)^{(n-2) (f(n)-1)}\;\;\ne\;\; 0,
\end{eqnarray*}
where $t_0:=-(f(n)-1)/(-s)^{n-2}$.
Thus  $\HHH+ \MMM_0$ is a lattice of rank $b$ 
with the basis~\eqref{eq:blist}.
Since $\rank \latN(X)\le b$,
 we conclude that $\rank \latN(X)=b$ and that
 $\latN(X)$ contains $\HHH+ \MMM_0$ as a sublattice of finite index.
Therefore 
 $\disc \latN(X)$ is a divisor  of
 $\disc (\HHH+ \MMM_0)=\pm s^{(n-2) (f(n)-1)}$.
\par
For the case $n=3$,
we consider the submodule 
$\ang{[h_1]}+\MMM$.
The intersection matrix of the generators
$[h_1]$ and $[\Sigma_{\ell(P)}]$  $(P\in \Pset{0})$
of this submodule  is the diagonal matrix of size $b=f(3)+1$
with diagonal components
$s, -s, \dots, -s$.
Consequently, 
 $\ang{[h_1]}+\MMM$ is a lattice of rank $b$ with the discriminant $\pm  s^{f(3)+1}$.
Hence $\latN (X)$ is a lattice of rank $b$ containing
$\ang{[h_1]}+\MMM$ as a sublattice of finite index,
and   $\disc \latN (X)$ is a divisor of $s^{f(3)+1}$.
\end{proof}
\begin{proof}[Proof of Corollary~\ref{cor:cohring}]
By Proposition~\ref{prop:FFF},  
the  subvariety $X\subset \P_*(V)\times \P^*(V)$ is
a smooth complete intersection of very ample divisors 
$D_1\in |\OOO(r, 1)|$ and $D_2\in |\OOO(1, s)|$.
Hence, by Lefschetz hyperplane section theorem of Deligne~\cite{MR601520}
(see also~\cite{MR0472817}),
the inclusion of $X$ into $ \P_*(V)\times \P^*(V)$ induces
 isomorphisms of $l$-adic cohomology groups in degree $<\dim X$.
 On the other hand, 
Theorem~\ref{thm:NNN} implies that  the cycle map 
induces an isomorphism from $\latN(X)\tensor \Q_l$
to the  middle $l$-adic cohomology group of $X$.
\end{proof}
\begin{remark}\label{rem:unimod}
Theorem~\ref{thm:NNN} implies that, if $r=1$ or $s=1$, then 
$\latN(X[r,s]^1_1)$ is unimodular.
Recall from 
Remark~\ref{rem:unirational} that,
if $r=1$ or $s=1$, then $X[r,s]^1_1$ is a rational variety.
\end{remark}
Next we prove Theorem~\ref{thm:NNNprim} on  the primitive part $\Nprim(X)$ of $\latN(X)$.
\begin{proof}[Proof of Theorem~\ref{thm:NNNprim}]
We use the notation in the proof of Theorem~\ref{thm:NNN}.
Since 
$$
\det A_{\HHH}=\det A(n-1, s, r, 0)=\pm f(n)\ne 0,
$$
the submodule $\HHH$ is a sublattice of $\latN(X)$ with rank $n-1$.
Therefore $\Nprim (X)=\HHH\sperp$ is also a sublattice with 
$$
\rank \Nprim (X)=b-(n-1)=f(n)-1.
$$
By~\eqref{eq:hSigma},
the classes
\begin{equation}\label{eq:Sigmadiff}
[\Sigma_\Lambda]-[\Sigma_{\Lambda\sprime}]
\qquad(\Lambda, \Lambda\sprime\in \Lset{k}, \,k=1, \dots, n-1)
\end{equation}
are contained in $\Nprim (X)$. 
In particular, we have
$\MMM_D\subset \Nprim (X)$.
By~\eqref{eq:SigmaSigma},
we have
$$
([D_{\ell(P)}],
[D_{\ell(P\sprime)}])=\begin{cases}
2(-s)^{n-2} & \textrm{if $P=P\sprime$, }\\
(-s)^{n-2} & \textrm{if $P\ne P\sprime$. }
\end{cases}
$$
Hence the intersection matrix $A_D$ of the
classes $[D_{\ell(P)}]$ ($P\in \Pset{0}, P\ne \baseptP$) 
is non-degenerate.
Therefore $\MMM_D$ is of rank $f(n)-1$,
and we have 
\begin{equation}\label{eq:MDprim}
\MMM_D\tensor \Q=\Nprim (X)\tensor \Q.
\end{equation}
The symmetric  matrix $A_D$ multiplied by $(-1)^n$ defines a  positive-definite
quadratic form.
Hence 
$[-1]^{n}\Nprim (X)$
 is a positive-definite lattice.
 \end{proof}
\section{Dense lattices}\label{sec:dense}
In this section,
 we investigate  the case where
 $n=4$, $l=c=1$ and $p=r=s=2$,
 and prove Theorem~\ref{thm:Nprim}.
 We put $X:=X [2,2]_{1}^1$ 
 throughout this section.
 Note that $X$ is of dimension $4$.
 \par
\medskip
The \emph{minimal norm $\Nmin(L)$}  of a positive-definite lattice $L$ of rank $m$ is
the minimum of norms $x^2$ of non-zero vectors $x\in L$, and 
the \emph{normalized center density} $\delta (L)$ of  $L$ 
is defined  by
$$
\delta (L):=(\disc L)\sp{-1/2} \cdot (\Nmin(L)/4)\sp{m/2},
$$
where $\disc L$ is  the discriminant of $L$.
It is known that,
for each $m$, there exists a lattice $L$ 
such that  $\delta (L)$ exceeds
the Minkowski-Hlawka bound
$$
\zeta(m)\cdot 2\sp{-m+1}\cdot V\sb{m}\sp{-1},
$$
where $\zeta$ is the Riemann zeta function and 
$V\sb{m}$ is the volume of the $m$-dimensional unit ball.
(See~\cite[Chap.~VI]{MR0306130} or~\cite[Chap.~1]{MR1662447} for the Minkowski-Hlawka theorem.)
However the proof is not constructive.
\par
\medskip
We recall the notion of  \emph{dual lattices}.
Let $L$ be a lattice.
Then $L\tensor \Q$ is equipped with the $\Q$-valued symmetric bilinear form
that extends  the $\Z$-valued symmetric bilinear form on $L$.
We define the \emph{dual lattice $L\dual$} of $L$ by
$$
L\dual:=\set{x\in L\tensor \Q}{(x, y)\in \Z\;\;\textrm{for any\, $y\in L$}}.
$$
Then $L\dual$ is a  $\Z$-module 
containing $L$ as a submodule of finite index.
By definition, 
if $L_1$ and $L_2$ are sublattices of a lattice $L_3$ such that $L_1\subset L_2\tensor \Q$,
then $L_1$ is contained in  $L_2\dual$.
\par
\medskip
We use the notation of the previous section adapted to the present situation
$n=4$ and $p=r=s=2$.
Note that $\MMM$ is a lattice of rank $f(4)=|\Pset{0}|=85$
with the orthogonal basis $[\Sigma_{\ell(P)}]$ ($P\in \Pset{0}$).
Let 
$$
\NSigma(X)\;\subset\; \Nprim(X)
$$
be the submodule 
generated by the classes~\eqref{eq:Sigmadiff}. 
Since $\MMM_{D}\subset \NSigma(X)$, we have
$$
\MMM_D\tensor \Q=\NSigma(X)\tensor\Q=\Nprim (X)\tensor \Q
$$
by~\eqref{eq:MDprim}.
In particular, $\NSigma(X)$ is a lattice.
Since $\MMM_{D}\subset \MMM$, 
we have $\NSigma(X)\subset \MMM\tensor\Q$ in $\latN(X)\tensor \Q$.
We apply the above argument to  $L_1=\NSigma(X)$, $L_2=\MMM$, $L_3=\latN(X)$, and 
regard  $\NSigma(X)$ as embedded  in the dual lattice $\MMM\dual$.
\par
\medskip
Let $e_P$ $(P\in \Pset{0})$ be the  basis of $ \MMM\dual$
dual  to the orthogonal basis $[\Sigma_{\ell(P)}]$ of $\MMM$:
\begin{equation}\label{eq:sgnnMMM}
\MMM\dual:=\bigoplus_{P\in \Pset{0}} \Z e_P\;\cong\;\Z^{85}.
\end{equation}
We describe 
the submodule  $\NSigma\subset  \MMM\dual$
in a combinatorial way
using the projective geometry of $\Pset{0}=\P^{3}(\F_{4})$. 
We  put
$$
\Pset{k-1}:=\set{S(\Lambda)}{\Lambda\in \Lset{k}},
$$
which is a subset of the power set $2^{\Pset{0}}$ of $\Pset{0}$.
By~\eqref{eq:SigmaSigma},
the vector 
 $[\Sigma_{\ell(P)}] \in \MMM  \subset  \MMM\dual$
is equal  to $s^{n-2}\, e_P=4e_p$,  and 
hence we have $\MMM=s^{n-2}\,(\MMM\dual)=4\MMM\dual$.
Moreover 
the $\Q$-valued symmetric bilinear form on 
$\MMM\dual$ is  given by
$$
(e_P, e_{P\sprime})=
\begin{cases}
1/s^{n-2}=1/4 & \textrm{if $P=P\sprime$,}\\
0 & \textrm{if $P\ne P\sprime$}.
\end{cases}
$$
For $S\in 2^{\Pset{0}}$,
we put 
$$
v_S:=\sum_{P\in S} e_P\;\in\; \MMM\dual.
$$
By~\eqref{eq:SigmaSigma}, 
we see that 
$\NSigma$ is the submodule of $\MMM\dual$ generated by  
\begin{equation}\label{eq:combi}
s^{3-k} (v_S-v_{S\sprime})\qquad (S, S\sprime\in \Pset{k-1}, \;k=1, \dots, 3).
\end{equation}
\par
\medskip
Next we  introduce a code $\CCC$ over 
$$
R:=\Z/s^{n-1}\Z=\Z/8\Z
$$
and a  lattice $\MMM_{\CCC}$.
The  reduction homomorphism 
$ \MMM\dual \to  \MMM\dual\tensor R$ is denoted by $v\mapsto\bar v$.
Let $\CCC\subset  \MMM\dual\tensor R$ be
the image of $\NSigma\subset   \MMM\dual$ by $v\mapsto\bar v$.
Using~\eqref{eq:sgnnMMM}, we regard $\CCC$ as a submodule of $R^{f(4)}=R^{85}$, and 
consider $\CCC$ as an  $R$-code of length $85$.
Let $\MMM_{\CCC}\subset  \MMM\dual$ denote 
the pull-back of $\CCC$ by the reduction homomorphism.
Since
$$
\MMM_{\CCC}= (\NSigma)  + 8 (\MMM\dual)=(\NSigma)  + 2 (\MMM),
$$
the $\Q$-valued symmetric bilinear form on $\MMM\dual$ takes values in 
$\Z$ on $\MMM_{\CCC}$.
Therefore $\MMM_{\CCC}$ is a lattice.
\begin{theorem}\label{thm:MCCC}
The lattice  $\MMM_{\CCC}$ is an even  positive-definite lattice of rank $85$,
with discriminant $2^{20}$, and of minimal norm $8$. 
\end{theorem}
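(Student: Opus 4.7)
The plan is to verify the four assertions---rank $85$, positive-definiteness together with integrality and evenness, discriminant $2^{20}$, and minimal norm $8$---in turn. Throughout I would work inside the ambient quadratic space $\MMM\dual \tensor \Q \cong \Q^{85}$ with the diagonal form $(e_P, e_{P'}) = \delta_{PP'}/4$, which is manifestly positive-definite. The rank claim is immediate since $\MMM_{\CCC} \supseteq 2\MMM = 8\MMM\dual$ and $\MMM\dual$ is free of rank $|\Pset{0}| = 85$; positive-definiteness of $\MMM_{\CCC}$ then follows from positive-definiteness of the ambient form.

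For integrality and evenness, by the quadratic-form identity it is enough to check $(g, g) \in 2\Z$ for each generator $g$ of $\NSigma$ and $(g, g') \in \Z$ for each pair of generators. For $g = 2^{3-k}(v_S - v_{S'})$ with $S, S' \in \Pset{k-1}$ one computes $(g, g) = 4^{2-k}(|S| + |S'| - 2|S \cap S'|)$, and with $|S|, |S'| \in \{1, 5, 21\}$ and the incidence combinatorics of $\P^3(\F_4)$ this evaluates to $8$ or $10$. Cross-pairings between generators of indices $k$ and $k'$ come out in $\Z$ because the relevant intersection cardinalities (elements of $\{0, 1, 5, 21\}$) are all congruent modulo $4$, so the prefactor $2^{6-k-k'}$ absorbs the denominator; and the generators of $2\MMM$ pair integrally with everything and are visibly even.

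Next, the discriminant. Since $\MMM = 4\MMM\dual$, one has $\disc \MMM\dual = 4^{-85}$, whence
\[
\disc \MMM_{\CCC} = [\MMM\dual : \MMM_{\CCC}]^2 \cdot \disc \MMM\dual = \frac{(8^{85}/|\CCC|)^2}{4^{85}} = \frac{2^{340}}{|\CCC|^2},
\]
so the claim $\disc \MMM_{\CCC} = 2^{20}$ is equivalent to $|\CCC| = 2^{160}$. To compute $|\CCC|$ I would exploit the $2$-adic filtration $\CCC \supseteq \CCC \cap 2W \supseteq \CCC \cap 4W \supseteq 0$ inside $W = R^{85}$, so that $|\CCC|$ is the product of the orders of three $\F_2$-subspaces of $\F_2^{85}$. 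The top piece is the $\F_2$-span $D_3$ of $\{\bar v_H + \bar v_{H'} : H, H' \in \Pset{2}\}$, since only the $k = 3$ generators survive modulo $2$; the middle and bottom pieces give $\F_2$-spans $D_2$ and $D_1$ generated by the reductions of $v_L + v_{L'}$ and of $e_P + e_{P'}$ respectively, augmented by the higher-order corrections coming from $\F_2$-null combinations of the larger-index generators that happen to lie in $2W$ or $4W$. The target $|\CCC| = 2^{160}$ then amounts to the identity $\dim_{\F_2} D_1 + \dim_{\F_2} D_2 + \dim_{\F_2} D_3 = 160$, a statement about $\F_2$-ranks of incidence matrices for flats in $\P^3(\F_4)$.

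For the minimal norm, the plane differences $v_H - v_{H'}$ with $H \neq H'$ in $\Pset{2}$ lie in $\NSigma \subset \MMM_{\CCC}$ and have norm $(21 + 21 - 10)/4 = 8$. To exclude smaller non-zero norms, observe that any $v = \sum a_P e_P \in \MMM_{\CCC}$ has norm $\sum a_P^2 / 4$; evenness forces $\sum a_P^2 \in 8\Z$, so norm $< 8$ reduces to checking $\sum a_P^2 \in \{8, 16, 24\}$. The corresponding support patterns are severely restricted and each is ruled out by verifying that its reduction modulo $8$ fails to lie in $\CCC$, using the explicit description of $\CCC$ from the discriminant analysis. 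The principal obstacle is the combinatorial computation of the three dimensions $\dim_{\F_2} D_i$: they depend on $\F_2$-ranks of point--line and point--plane incidence matrices of $\P^3(\F_4)$, which are not trivial and must be pinned down precisely (for instance by decomposing under the $\mathord{\mathrm{PGL}}_4(\F_4)$-action). Once these ranks are in hand, both the discriminant identity and the minimal-norm case analysis fall into place.
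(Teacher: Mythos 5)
Your overall framework is sound and in fact coincides with the paper's: the paper also works inside $\MMM\dual$ with the form $(e_P,e_{P'})=\delta_{PP'}/4$, and its minimal-norm argument is exactly your $2$-adic filtration, via the codes $\Gamma_\nu=\KKK_\nu/\KKK_{\nu+1}\subset\F_2^{85}$ for $\nu=0,1,2$. Your evenness argument (pairing the generators $2^{3-k}(v_S-v_{S'})$ directly, using $|S|\in\{1,5,21\}$ and the intersection combinatorics of flats in $\P^3(\F_4)$) is more conceptual than the paper's, which simply computes a Gram matrix by machine; note only that your blanket claim that the intersection cardinalities are ``all congruent modulo $4$'' is false as stated (a line meets a line in $0$, $1$ or $5$ points), though in that case the prefactor $2^{6-k-k'}/4$ is already integral, so the conclusion survives a case-by-case check.

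The genuine gap is that both quantitative claims are left unproved at exactly the point where the content lies. For the discriminant you correctly reduce to $\dim_{\F_2}D_1+\dim_{\F_2}D_2+\dim_{\F_2}D_3=160$, but you do not determine these dimensions; the paper's values are $16$, $60$, $84$ (obtained by computer from the Gram matrix / the generators, not by a structural analysis of incidence matrices), and without them the discriminant is not established. For the minimal norm, your plan --- enumerate support patterns with $\sum a_P^2\in\{8,16,24\}$ and check that each reduction fails to lie in $\CCC$ --- is not workable as stated: a vector with $\sum a_P^2=24$ can have up to $24$ nonzero coordinates placed anywhere among $85$ positions, so membership in $\CCC$ cannot be excluded ``by pattern''; what is actually needed is a lower bound on the minimum weights of $\Gamma_0$, $\Gamma_1$, $\Gamma_2$ (namely $32$, $8$, $2$). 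The paper obtains these from explicit weight enumerators, and the middle one is the delicate step: $\Gamma_1$ has dimension $60$, so its weight enumerator cannot be computed by direct enumeration and is instead derived from the $25$-dimensional dual code via the MacWilliams identity. Your proposal contains no substitute for these computations, and you acknowledge as much; as written it is a correct reduction of the theorem to two finite computations, not a proof.
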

\begin{proof}
Since $\MMM_{\CCC}$ is the submodule generated by the vectors~\eqref{eq:combi} 
and
 $8e_P$ $(P\in \Pset{0})$
in $\MMM\dual$,
we can calculate the basis and the Gram matrix of $\MMM_{\CCC}$ by a computer,
and confirm that $\MMM_{\CCC}$ is even and 
of discriminant $2^{20}$.
For $P\ne P\sprime$, 
the vector
$4(e_P-e_{P\sprime})\in \MMM_{\CCC}$ 
in~\eqref{eq:combi} with  $k=1$ has norm $8$.
Hence all we have to prove is that
every non-zero vector of $\MMM_{\CCC}$ is of norm $\ge 8$.
We assume that a non-zero vector ${w_s} \in \MMM_{\CCC}$ satisfies ${w_s}^2<8$,
and derive a contradiction.
We express ${w_s}$ as a vector in $\Z^{f(4)}=\Z^{85}$ by~\eqref{eq:sgnnMMM}.
Recall that ${\bar{w}_s}\in \CCC\subset R^{85}$ is the code word ${w_s}\bmod 8$.
\par
\medskip
For $\nu=0,1,2,3$,
we put
$$
\KKK_\nu:=\Ker (\;\CCC\inj \MMM\dual\tensor R\to \MMM\dual\tensor \Z/2^\nu \Z\;),
$$
where $\MMM\dual\tensor R\to \MMM\dual\tensor \Z/2^\nu\Z$ is the reduction homomorphism.
Then we have a filtration
$$
\CCC=\KKK_0\;\supset\; \KKK_1\;\supset\; \KKK_2\;\supset\; \KKK_3=0,
$$
and each  quotient
$$
\Gamma_\nu:=\KKK_\nu/\KKK_{\nu+1}
$$ 
is naturally regarded as an $\F_2$-code of length $85$.
\par
\smallskip
We fix  terminologies.
Let $\Gamma\subset \F_2^N$ be an $\F_2$-code.
The \emph{Hamming weight} $\weight (\omega)$ of a code word 
$\omega\in \Gamma$ is the number of $1$  
that occurs in the components of $\omega$.
The \emph{weight enumerator} of  $\Gamma$ is 
the polynomial $\sum_{\omega\in \Gamma} x^{\weight(\omega)}$.
\par
\smallskip
We compute the weight enumerator of
the $\F_2$-code 
$$
\Gamma_0=\MMM_{\CCC}/(\MMM_{\CCC}\cap 2\MMM\dual) \;\subset\;
 \MMM\dual/2\MMM\dual=
\MMM\dual\tensor \F_2
$$
 of dimension $16$ by a computer.
 The result is  
$$
1+3570\,{x}^{32} + 38080\,{x}^{40} +23800\,{x}^{48}+85\,{x}^{64}.
$$
If the image of ${\bar{w}_s}\in \CCC$ by the projection $\CCC\to \Gamma_0$
were non-zero,
then ${w_s}$ would have at least $32$ odd components and hence ${w_s}^2\ge 8$.
Thus we have ${\bar{w}_s} \in \KKK_1$. 
The $\F_2$-code 
$$
\Gamma_1=(\MMM_{\CCC}\cap 2\MMM\dual)/(\MMM_{\CCC}\cap 4\MMM\dual) \;\subset\; 
2\MMM\dual/4\MMM\dual\cong
\MMM\dual\tensor \F_2
$$
is of dimension $60$.
The weight enumerator of $\Gamma_1$ cannot be calculated directly,
because $2^{60}$ is too large.
However,
the orthogonal complement $\Gamma_1\sperp$ 
of $\Gamma_1$
with respect to the standard inner product on $\F_2^{85}$ is of dimension $25$,
and hence its weight enumerator is calculated in a naive method. 
Via the MacWilliams Theorem (see~\cite[Ch.5]{MR0465510}),
we see that   the weight enumerator of $\Gamma_1$ is 
\begin{equation*}
\begin{split}
1+17850\,{x}^{8}+45696\,{x}^{10}+&8020600\,{x}^{12}+229785600\,{x}^{14}
+4668633585\,{x}^{16}+\cdots \\
&
\cdots+1142400\,{x}^{74}+23800\,{x}^{76}+357\,{x}^{80}.
\end{split}
\end{equation*}
In particular, every non-zero code word of $\Gamma_1$ is 
of Hamming weight $\ge 8$.
Therefore, if the image of ${\bar{w}_s}\in \KKK_1$ in $\Gamma_1$ were non-zero,
then ${w_s}$ would have  at least $8$ components that are congruent to $2$ modulo $4$,
and hence ${w_s}^2\ge 8$.
Thus we have ${\bar{w}_s}\in \KKK_2$.
The $\F_2$-code 
$$
\Gamma_2=(\MMM_{\CCC}\cap 4\MMM\dual)/(\MMM_{\CCC}\cap 8\MMM\dual) \;\subset\; 
4\MMM\dual/8\MMM\dual\cong
\MMM\dual\tensor \F_2
$$
is of dimension $84$,
and is defined in $\MMM\dual\tensor \F_2$  by an equation
$$
x_0+\cdots + x_{84}=0.
$$
Therefore, if the image of ${\bar{w}_s}\in \KKK_2$ in $\Gamma_2$ were non-zero,
then ${w_s}$ would have  at least $2$ components that are congruent to $4$ modulo $8$,
and hence ${w_s}^2\ge 8$.
Thus we have ${\bar{w}_s}\in \KKK_3$.
Hence every component of ${w_s}$ is congruent to $0$ modulo $8$.
Since ${w_s}$ is non-zero,  we have  ${w_s}^2\ge 8$, which contradicts the hypothesis.
\end{proof}
\begin{proof}[Proof of Theorem~\ref{thm:Nprim}]
Since $\NSigma(X)$ is generated by the vectors~\eqref{eq:combi},
we can calculate the Gram matrix of $\NSigma(X)$,
and show that $\disc \NSigma(X)=85\cdot 2^{16}$.
On the other hand,
using~\eqref{eq:SigmaSigma},~\eqref{eq:hSigma} and~\eqref{eq:hh},
we can realize $\HHH$ and $\NNN(X)$ as  submodules of  
$(\HHH+\MMM_0)\dual$
in terms of the dual basis of the basis~\eqref{eq:blist} of $\HHH+\MMM_0$,
and 
compute the Gram matrix of 
$\Nprim(X)=\HHH\sperp$.
It turns out 
that $\Nprim(X)$ is
also of discriminant $85\cdot 2^{16}$.
Hence
we conclude that $\NSigma(X)=\Nprim(X)$.
It is easy to see that
the minimal norm of $\NSigma(X)$ is  $\le 8$.
Since $\NSigma(X)$ 
 is embedded in the lattice $\MMM_{\CCC}$,
we see  that $\Nprim(X)$  is even
and of minimal norm $\ge 8$ by Theorem~\ref{thm:MCCC}.
\end{proof}
\par
\medskip
\begin{remark}
The intersection pairing of algebraic cycles 
on an algebraic variety 
in positive characteristic
has been used to construct  dense lattices.
For example, Elkies~\cite{MR1289579, MR1437492, MR1917431} and Shioda~\cite{MR1129298} constructed 
many  lattices of high density 
as Mordell-Weil lattices of elliptic surfaces in positive characteristics.
See also~\cite[page xviii]{MR1662447}.
\end{remark}
\begin{remark}
In~\cite{MR1794260},
we have obtained  a dense lattice of rank $86$ 
from  the Fermat cubic $6$-fold in characteristic $2$.
This lattice is also closely related to $\MMM_{\CCC}$.
\end{remark}
%
%
%
%
%

\section{Acknowledgement} 
Thanks are due to Professor Nobuyoshi Takahashi and Professor Shigeru Mukai for helpful discussions.
This work is
partially supported by
 JSPS Core-to-Core Program No.~18005  and 
 JSPS Grants-in-Aid for Scientific Research (B) No.~20340002.
 We also thank the referee for valuable comments and suggestions.

\bibliographystyle{plain}

\def\cprime{$'$} \def\cprime{$'$} \def\cprime{$'$} \def\cprime{$'$}

\end{document}